% !TEX TS-program = pdflatex
 %!TEX encoding = UTF-8 Unicode

 %This is a simple template for a LaTeX document using the "article" class.
 %See "book", "report", "letter" for other types of document.

\documentclass[10pt]{article}

\usepackage[utf8]{inputenc} % set input encoding (not needed with XeLaTeX)

% Examples of Article customizations
 %These packages are optional, depending whether you want the features they provide.
 %See the LaTeX Companion or other references for full information.

% PAGE DIMENSIONS
\usepackage{geometry} % to change the page dimensions
\geometry{a4paper} % or letterpaper (US) or a5paper or....
\geometry{margin=1in} % for example, change the margins to 2 inches all round
   %read geometry.pdf for detailed page layout information

\usepackage{graphicx} % support the \includegraphics command and options

\usepackage[parfill]{parskip} % Activate to begin paragraphs with an empty line rather than an indent

% PACKAGES
\usepackage{booktabs} % for much better looking tables 
\usepackage{array} % for better arrays (eg matrices) in maths
\usepackage{paralist} % very flexible & customisable lists (eg. enumerate/itemize, etc.)
\usepackage{verbatim} % adds environment for commenting out blocks of text & for better verbatim
\usepackage{subfig} % make it possible to include more than one captioned figure/table in a single float
% These packages are all incorporated in the memoir class to one degree or another...

% HEADERS & FOOTERS
\usepackage{fancyhdr} % This should be set AFTER setting up the page geometry
\pagestyle{fancy} % options: empty , plain , fancy
 % customise the layout...
\lhead{}\chead{}\rhead{}
\lfoot{}\cfoot{\thepage}\rfoot{}

% ToC (table of contents) APPEARANCE
\usepackage[nottoc,notlof,notlot]{tocbibind} % Put the bibliography in the ToC
\usepackage[titles,subfigure]{tocloft} % Alter the style of the Table of Contents

 % No bold!

\usepackage[T1]{fontenc}
\usepackage[french]{babel}
\usepackage{amsmath,bm}
\usepackage{amsfonts}
\usepackage{amsthm}
\usepackage{enumitem}

\begingroup
    \makeatletter
    \@for\theoremstyle:=definition,remark,plain\do{%
        \expandafter\g@addto@macro\csname th@\theoremstyle\endcsname{%
            \addtolength\thm@preskip\parskip
            }%
        }
\endgroup

\usepackage{mathtools}
\usepackage{tikz-cd}
\usepackage{amssymb}
\usepackage{ytableau}
\usepackage{extpfeil}
\usepackage{bm}
\usepackage{xcolor}
\usepackage{mathrsfs}
\usepackage{todonotes}

\definecolor{green}{rgb}{0.,0.6,0.}

% END Article customizations

\usepackage{setspace}
\raggedbottom

% The "real" document content comes below...

\title{A new order on integer partitions}
\author{Étienne Tétreault}
\date{} % Activate to display a given date or no date (if empty),
         % otherwise the current date is printed 

\begin{document}
	\maketitle
	\pagenumbering{gobble}
	\pagenumbering{arabic}
	\newcommand{\s}{\mathfrak{S}}
	\newcommand{\infl}{\text{Inf}}
	\newcommand{\mwrn}{\s_m \wr \s_n}
	\newcommand{\tbduu}[2]{\ytableausetup{boxsize=1.3em, aligntableaux=center, tabloids} \begin{ytableau} #2 \\ #1 \end{ytableau}}
	\newcommand{\tyd}[2]{\ytableausetup{boxsize=1.3em} \begin{ytableau} #1 & #2 \end{ytableau}}
	\newcommand{\tyt}[3]{\ytableausetup{boxsize=1.3em} \begin{ytableau} #1 & #2 & #3 \end{ytableau}}
	\newcommand{\tbdd}[4]{\ytableausetup{boxsize=1.3em} \begin{ytableau} #3 & #4 \\ #1 & #2 \end{ytableau}}
	\newcommand{\ttbdd}[4]{\ytableausetup{tabloids, boxsize=1.9em} \begin{ytableau} #3 & #4 \\ #1 & #2 \end{ytableau}}
	\newcommand{\tyduu}[4]{\ytableausetup{tabloids, boxsize=1.9em} \begin{ytableau} #4 \\ #3 \\ #1 & #2 \end{ytableau}}
	\newcommand{\tytu}[4]{\ytableausetup{boxsize=1.3em} \begin{ytableau} #4 \\ #1 & #2 & #3 \end{ytableau}}
	\newcommand{\tytd}[5]{\ytableausetup{boxsize=1.3em} \begin{ytableau} #4 & #5 \\ #1 & #2 & #3 \end{ytableau}}
	\newcommand{\tydu}[3]{\ytableausetup{boxsize=1.5em, aligntableaux=center} \begin{ytableau} #3 \\ #1 & #2 \end{ytableau}}
	\newcommand{\tbddu}[3]{\ytableausetup{boxsize=1.5em, aligntableaux=center, tabloids} \begin{ytableau} #3 \\ #1 & #2 \end{ytableau}}
	\newcommand{\tbddd}[4]{\ytableausetup{boxsize=1.5em, aligntableaux=center, tabloids} \begin{ytableau} #3 & #4 \\ #1 & #2 \end{ytableau}}
	\newcommand{\tbdtd}[5]{\ytableausetup{boxsize=1.5em, aligntableaux=center, tabloids} \begin{ytableau} #4 & #5 \\ #1 & #2 & #3 \end{ytableau}}
	\newcommand{\ltbdd}[4]{\ytableausetup{boxsize=1.9em} \begin{ytableau} #3 && #4 \\ #1 && #2 \end{ytableau}}
	\newcommand{\hooklongrightarrow}{\lhook\joinrel\longrightarrow}
	\newcommand{\HOM}{\text{Hom}}
	\newcommand{\SSYT}{\text{SSYT}}
	\newcommand{\sgn}{\text{sgn}}
	\newcommand{\C}{\mathbb{C}}
	\newcommand{\R}{\mathbb{R}}
	\newcommand{\Q}{\mathbb{Q}}
	\newcommand{\Z}{\mathbb{Z}}
	\newcommand{\N}{\mathbb{N}}
	\newcommand{\Star}[2]{\overset{#2}{\underset{#1}{*}}}
	\newtheorem{thm}{Theorem}[section]
	\newtheorem{prop}[thm]{Proposition}
	\newtheorem{cor}[thm]{Corollary}
	\newtheorem{conj}[thm]{Conjecture}
	\newtheorem{lem}[thm]{Lemma}
	\renewcommand{\proofname}{Proof}
	\renewcommand{\abstractname}{Abstract}
	\renewcommand{\bibname}{References}

\begin {abstract}
Considering Schur positivity of differences of plethysms of homogeneous symmetric functions, we introduce a new relation on integer partitions. This relation is conjectured to be a partial order, with its restriction to one part partitions equivalent to the classical Foulkes conjecture. We establish some of the properties of this relation via the construction of explicit inclusion of modules whose characters correspond to the plethysms considered. We also prove some stability properties for the number of irreducible occurring in these modules as $m$ grows.
\end {abstract}

It’s been more than eighty years since Littlewood \cite{Littlewood} introduced the plethysm operation on symmetric functions (although the name was only introduced in 1950). This binary operation, denoted by $f[g]$, plays a fundamental role in representation theory, and its calculation raises many interesting questions. Indeed, calculating the “structure” coefficients of plethysms of the Schur functions is considered by Stanley \cite{Stanley99} as a key problem in algebraic combinatorics, and it also appears at the forefront of current research in Geometric Complexity Theory. A longstanding conjecture, stated by Foulkes \cite{Foulkes} in 1953, simply concerns inequalities between some of these coefficients in simple cases. With the aim of setting up our own context, we cast the statement of his conjecture in terms of coefficients $a_{\nu,\mu}^{\lambda}$ of the Schur expansion of the plethysm
\[
h_{\nu}[h_{\mu}] = \sum_\lambda a_{\nu[\mu]}^{\lambda} \, s_\lambda,
\]
of complete homogeneous symmetric functions, which are well known to be positive integers.
Foulkes’ conjecture states that for all $n \leq m$, and all partition $\lambda$ of $mn$, we have $a_{(n)[(m)]}^{\lambda} \leq a_{(m)[(n)]}^{\lambda}$. This has been proven to hold for $n \leq 5$ \cite{Thrall} \cite{DentSiemons} \cite{McKay} \cite{CIM}, and when $m$ and $n$ are far enough apart \cite{Brion}. But it still remains open in full generality. 

The question may naturally be extended to pairs of integer partitions $\nu$ and $\mu$ as follows.  Consider the binary relation $\nu \trianglelefteq \mu$ on integer partitions, which holds if and only if $h_{\mu}[h_{\nu}] -h_{\nu}[h_{\mu}]$ is Schur positive, or equivalently $a_{\nu[\mu]}^{\lambda}\leq a_{\mu[\nu]}^{\lambda}$ for all $\lambda$. Clearly ''$\trianglelefteq$'' is reflexive and antisymmetric (if we exclude the column partitions $(1^k)$.). The conjecture\footnote{This was first observed by F.Bergeron, personal communication.} here is that it is also transitive; hence it gives us a partial order on integer partitions (see Figure~\ref{fig1}). Clearly, Foulkes conjecture corresponds to the statement that 
  $$ (n)\trianglelefteq (m),\qquad {\mathrm{iff}}\qquad n\leq m,$$
for one part partitions $(m)$ and $(n)$ associated to positive integers $m$ and $n$.

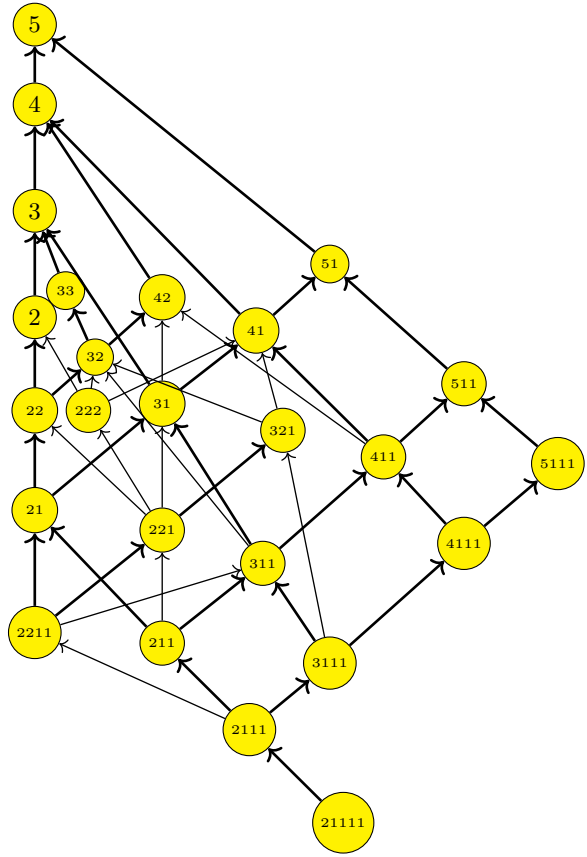
\begin{figure}[!ht]
\begin{center}
\begin{tikzpicture}[line join=bevel,scale=.5]
 \coordinate (P5) at (10bp,790bp);
 \coordinate (P4) at (10bp,730bp);
 \coordinate (P3) at (10bp,650bp);
\coordinate (P2) at (10bp,570bp);
 \coordinate (P33) at (33bp,590bp);
 \coordinate (P42) at (105bp,585bp);
 \coordinate (P51) at (230bp,610bp);
 \coordinate (P511) at (330bp,520bp);
\coordinate (P32) at (55bp,540bp);
 \coordinate (P41) at (175bp,560bp);
 \coordinate (P22) at (10bp,500bp);
 \coordinate (P222)at (50bp,500bp);
 \coordinate (P321) at (195bp,485bp);
 \coordinate (P5111) at (400bp,460bp);
 \coordinate (P411) at (270bp,465bp);
\coordinate (P31)  at (105bp,505bp);
 \coordinate (P21) at (10bp,425bp);
 \coordinate (P221) at (105bp,410bp);
 \coordinate (P4111) at (330bp,400bp);
 \coordinate (P311) at (180bp,385bp);
 \coordinate (P2211) at (10bp,333bp);
 \coordinate (P211) at (105bp,325bp);
 \coordinate (P3111) at (230bp,310bp);
 \coordinate (P2111) at (170bp,260bp);
 \coordinate (P21111) at (240bp,190bp);

\node[circle,fill=yellow,node font=\small] (mu_5) at (P5) [draw]  
	{$5$};
\node[circle,fill=yellow,node font=\small] (mu_4) at (P4) [draw]  
 	{$4$};
\node[circle,fill=yellow,node font=\small] (mu_3) at (P3) [draw]  
	{$3$};
\node[circle,fill=yellow,node font=\tiny,minimum size=.5cm] (mu_51) at (P51) [draw] 
  	{$51$};
\node[circle,fill=yellow,node font=\tiny,minimum size=.5cm] (mu_33) at (P33) [draw]
  	{$33$};
\node[circle,fill=yellow,node font=\tiny,minimum size=.6cm] (mu_42) at (P42) [draw] 
  	{$42$};
\node[circle,fill=yellow,node font=\small] (mu_2) at (P2) [draw] 
	{$2$};
\node[circle,fill=yellow,node font=\tiny] (mu_32) at (P32) [draw] 
  	{$32$};
\node[circle,fill=yellow,node font=\tiny,minimum size=.6cm] (mu_41) at (P41) [draw] 
  	{$41$};
\node[circle,fill=yellow,node font=\tiny] (mu_511) at (P511) [draw] 
  	{$\scriptstyle 511$};
\node[circle,fill=yellow,node font=\tiny] (mu_5111) at (P5111) [draw] 
  	{$\scriptstyle 5111$};
\node[circle,fill=yellow,node font=\tiny] (mu_411) at (P411) [draw] 
  	{$\scriptstyle 411$};
\node[circle,fill=yellow,node font=\tiny] (mu_321) at (P321) [draw]
  	 {$\scriptstyle 321$};
\node[circle,fill=yellow,node font=\tiny,minimum size=.6cm] (mu_31) at (P31) [draw] 
	{$31$};
\node[circle,fill=yellow,node font=\tiny] (mu_222) at (P222) [draw] 
  	{$\scriptscriptstyle 222$};
\node[circle,fill=yellow,node font=\tiny,minimum size=.6cm] (mu_22) at (P22) [draw] 
	{$22$};
\node[circle,fill=yellow,node font=\tiny,minimum size=.6cm] (mu_21) at (P21) [draw]  
	{$21$};
\node[circle,fill=yellow,node font=\tiny] (mu_221) at (P221) [draw] 
  	{$\scriptstyle 221$};
\node[circle,fill=yellow,node font=\tiny] (mu_311) at (P311) [draw] 
  	{$\scriptstyle 311$};
\node[circle,fill=yellow,node font=\tiny] (mu_4111) at (P4111) [draw] 
	 {$\scriptstyle 4111$};
\node[circle,fill=yellow,node font=\tiny] (mu_2211) at (P2211) [draw] 
  	{$\scriptstyle 2211$};
\node[circle,fill=yellow,node font=\tiny] (mu_211) at (P211) [draw] 
 	{$211$};
\node[circle,fill=yellow,node font=\tiny] (mu_3111) at (P3111) [draw] 
  	{$\scriptstyle 3111$};
 \node[circle,fill=yellow,node font=\tiny] (mu_2111) at (P2111) [draw] 
  	{$\scriptstyle 2111$};
 \node[circle,fill=yellow,node font=\tiny] (mu_21111) at (P21111) [draw] 
   	{$\scriptstyle 21111$};
\draw [line width=1pt,black,->] (mu_51) -- (mu_5);		% 5
	\draw [line width=1pt,black,->] (mu_4) -- (mu_5);
\draw [line width=1pt,black,->] (mu_42) -- (mu_4);		% 4
	\draw [line width=1pt,black,->] (mu_41) -- (mu_4);
	\draw [line width=1pt,black,->] (mu_3) -- (mu_4);
\draw [line width=1pt,black,->] (mu_33) -- (mu_3);		% 3
	\draw [line width=1pt,black,->] (mu_31) -- (mu_3);
	\draw [line width=1pt,black,->] (mu_2) -- (mu_3);
\draw [line width=1pt,black,->] (mu_32) -- (mu_33);		% 33
\draw [line width=1pt,black,->] (mu_32) -- (mu_42);		% 42
	\draw [line width=.5pt,black,->] (mu_31) -- (mu_42);
	\draw [line width=.5pt,black,->] (mu_411) -- (mu_42);
\draw [line width=1pt,black,->] (mu_41) -- (mu_51);		% 51
	\draw [line width=1pt,black,->] (mu_511) -- (mu_51);
\draw [line width=.5pt,black,->] (mu_222) -- (mu_2);		% 2
	\draw [line width=1pt,black,->] (mu_22) -- (mu_2);
\draw [line width=1pt,black,->] (mu_411) -- (mu_511);	% 511
	\draw [line width=1pt,black,->] (mu_5111) -- (mu_511);	% 511
\draw [line width=1pt,black,->] (mu_31) -- (mu_41);		% 41
	\draw [line width=.5pt,black,->] (mu_321) -- (mu_41);
	\draw [line width=.5pt,black,->] (mu_222) -- (mu_41);
	\draw [line width=1pt,black,->] (mu_411) -- (mu_41);
\draw [line width=.5pt,black,->] (mu_321) -- (mu_32);		% 32
	\draw [line width=1pt,black,->] (mu_22) -- (mu_32);
	\draw [line width=.5pt,black,->] (mu_222) -- (mu_32);
	\draw [line width=.5pt,black,->] (mu_311) -- (mu_32);
\draw [line width=.5pt,black,->] (mu_221) -- (mu_22);		% 22
	\draw [line width=1pt,black,->] (mu_21) -- (mu_22);
\draw [line width=.5pt,black,->] (mu_221) -- (mu_222);	% 222
\draw [line width=.5pt,black,->] (mu_3111) -- (mu_321);	% 321
	\draw [line width=1pt,black,->] (mu_221) -- (mu_321);
\draw [line width=1pt,black,->] (mu_4111) -- (mu_5111);	% 5111
\draw [line width=1pt,black,->] (mu_311) -- (mu_411);	% 411
	\draw [line width=1pt,black,->] (mu_4111) -- (mu_411);	% 411
\draw [line width=1pt,black,->] (mu_311) -- (mu_31);		% 31
	\draw [line width=1pt,black,->] (mu_21) -- (mu_31);
	\draw [line width=.5pt,black,->] (mu_221) -- (mu_31);
\draw [line width=1pt,black,->] (mu_3111) -- (mu_4111);	% 4111
\draw [line width=1pt,black,->] (mu_211) -- (mu_21);		% 21
	\draw [line width=1pt,black,->] (mu_2211) -- (mu_21);
\draw [line width=1pt,black,->] (mu_2211) -- (mu_221);	% 221
	\draw [line width=.5pt,black,->] (mu_211) -- (mu_221);
\draw [line width=1pt,black,->] (mu_211) -- (mu_311);	% 311
	\draw [line width=1pt,black,->] (mu_3111) -- (mu_311);
	\draw [line width=.5pt,black,->] (mu_2211) -- (mu_311);
\draw [line width=.5pt,black,->] (mu_2111) -- (mu_2211);	% 2211
\draw [line width=1pt,black,->] (mu_2111) -- (mu_211);	% 211
\draw [line width=1pt,black,->] (mu_2111) -- (mu_3111);	% 3111
\draw [line width=1pt,black,->] (mu_21111) -- (mu_2111);	% 2111

\end{tikzpicture}
 \end{center}\caption{Poset structure on partitions}\label{fig1}
 \end{figure}

	Our aim is to establish some basic results for the relation''$\trianglelefteq$'' using an approach to plethysm via the representation theory of the symmetric group. Indeed, coefficients of Schur expansion of plethysm of the form $h_{\nu}[h_{\mu}]$ occur as multiplicities of irreducible representations in certain modules, denoted here by $M^{\mu[\nu]}$. We construct them using similar techniques as \cite{PagetWildon}, although their construction produce modules associated to plethysm of Schur functions. This technique is used to show that "$\trianglelefteq$" is antisymmetric.

	The original Foulkes’ conjecture can then be interpreted as the existence of some injective homomorphism from $M^{(m)[(n)]}$ to $M^{(n)[(m)]}$ when $m \leq n$. One potential candidate for such a homomorphism is the Foulkes-Howe map \cite{Howe}. While it is not always injective (as shown in \cite{MN}), McKay showed \cite{McKay} that it is sufficient to show that it is injective when $m = n+1$.  Naturally, one is led to consider similar injective homomorphisms from $M^{(\nu)[()]}$ to $M^{(n)[(m)]}$. Hence we construct a generalization of the Foulkes-Howe map, and use it with other techniques to show that our conjecture holds in many cases. In particular, we prove that for all natural numbers $k$ and all partitions $\mu$, we have $(1^k) \trianglelefteq \mu$ and $\mu^k \trianglelefteq \mu$, meaning that this relation contains infinite transitive chains. We also show that the Foulkes’ conjecture implies that  $\mu\trianglelefteq (k)$ for all $k \geq n$, where $n$ is the greatest part of $\mu$. In terms of these modules, we further show some stability properties, like the fact that the irreducible representation associated to $\lambda+(\tilde{m}n)$ appears at least as many times in $M^{(\mu+\tilde{\mu})[\nu]}$ as the one associated to $\lambda$ does in $M^{\mu[\nu]}$.

\section*{Acknowledgments}

The author would like to thank François Bergeron for being the one who first remarked this potential order relation, for his general support and for his many comments and suggestions for this article. 

\section{Main results}

 For symmetric functions, notations used here are mostly those of \cite{Macdonald}. Other classical combinatorial and representation theory notions are recalled in many texts, for example in \cite{Bergeron}.

We describe in section \ref{construction} the $\s_{nm}$-modules $M^{\mu[\nu]}$, whose Frobenius transform of its character is the plethysm $h_{\nu}[h_{\mu}]$, using techniques of \cite{PagetWildon}. The fact that $\nu \trianglelefteq \mu$ is then the existence of an injective morphism $M^{\nu[\mu]} \hookrightarrow M^{\mu[\nu]}$. In section \ref{morphisms}, we show the following result:

\begin{prop}\label{injmorphism}
Let $\nu_1, \nu_2$ and $\mu$ be partitions. If $\nu_1 \trianglelefteq \mu$ and $\nu_2 \trianglelefteq \mu$, then $\nu_1 \sqcup \nu_2 \trianglelefteq \mu$. 
\end{prop}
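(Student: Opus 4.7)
The plan is to prove $\nu_1 \sqcup \nu_2 \trianglelefteq \mu$ directly at the level of symmetric functions, namely by establishing Schur-positivity of $D := h_\mu[h_{\nu_1 \sqcup \nu_2}] - h_{\nu_1 \sqcup \nu_2}[h_\mu]$. The starting observation is that $h_{\nu_1 \sqcup \nu_2} = h_{\nu_1}\, h_{\nu_2}$ together with the multiplicativity of plethysm in the outer slot, which gives $h_{\nu_1 \sqcup \nu_2}[h_\mu] = h_{\nu_1}[h_\mu]\cdot h_{\nu_2}[h_\mu]$.

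The key step is to interpolate through the intermediate product $h_\mu[h_{\nu_1}]\, h_\mu[h_{\nu_2}]$, writing $D = A + B$ with
\[ A := h_\mu[h_{\nu_1} h_{\nu_2}] - h_\mu[h_{\nu_1}]\, h_\mu[h_{\nu_2}], \qquad B := h_\mu[h_{\nu_1}]\, h_\mu[h_{\nu_2}] - h_{\nu_1}[h_\mu]\, h_{\nu_2}[h_\mu]. \]
For $B$, set $C_i := h_\mu[h_{\nu_i}] - h_{\nu_i}[h_\mu]$, which is Schur-positive by hypothesis; a direct expansion yields $B = h_{\nu_1}[h_\mu]\, C_2 + C_1\, h_{\nu_2}[h_\mu] + C_1 C_2$, a sum of products of Schur-positive functions. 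For $A$, I would invoke the multiplicativity $h_\mu[f] = \prod_j h_{\mu_j}[f]$ together with the Cauchy-type plethystic identity $h_n[fg] = \sum_{\lambda \vdash n} s_\lambda[f]\, s_\lambda[g]$; isolating the $\lambda = (n)$ term, which equals $h_n[f]\, h_n[g]$, gives
\[ h_{\mu_j}[h_{\nu_1} h_{\nu_2}] - h_{\mu_j}[h_{\nu_1}]\, h_{\mu_j}[h_{\nu_2}] = \sum_{\lambda \vdash \mu_j,\ \lambda \neq (\mu_j)} s_\lambda[h_{\nu_1}]\, s_\lambda[h_{\nu_2}], \]
which is Schur-positive since plethysms of Schur-positive functions are Schur-positive. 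Expanding $A = \prod_j(a_j + b_j) - \prod_j b_j$ with Schur-positive $a_j, b_j$ then keeps only Schur-positive summands.

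I expect the main obstacle to be bookkeeping rather than depth: verifying the Cauchy-like identity cleanly and tracking Schur-positivity across the product over the parts of $\mu$. Both ingredients are classical and can be packaged as a short preliminary lemma. Note that this approach does not produce an explicit injection $M^{(\nu_1 \sqcup \nu_2)[\mu]} \hookrightarrow M^{\mu[\nu_1 \sqcup \nu_2]}$; however, Schur-positivity of the difference of two permutation characters always yields such an injection at the level of isotypic components, which is enough to satisfy the definition of $\trianglelefteq$.
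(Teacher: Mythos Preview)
Your argument is correct. The decomposition $D=A+B$ works: $B$ is handled by the telescoping identity you wrote (products of Schur-positive functions are Schur-positive by Littlewood--Richardson), and for $A$ the plethystic Cauchy identity $h_n[fg]=\sum_{\lambda\vdash n}s_\lambda[f]\,s_\lambda[g]$ is standard, so each factor $h_{\mu_j}[h_{\nu_1}h_{\nu_2}]-h_{\mu_j}[h_{\nu_1}]\,h_{\mu_j}[h_{\nu_2}]$ is a sum of products $s_\lambda[h_{\nu_1}]\,s_\lambda[h_{\nu_2}]$ with $\lambda\neq(\mu_j)$, each Schur-positive. The expansion $\prod_j(a_j+b_j)-\prod_j b_j=\sum_{\emptyset\neq S}\prod_{j\in S}a_j\prod_{j\notin S}b_j$ then finishes the job.

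This is genuinely different from the paper's proof, which is entirely module-theoretic. The paper first establishes two structural facts: an isomorphism $M^{(\nu_1\sqcup\nu_2)[\mu]}\cong\bigl(M^{\nu_1[\mu]}\otimes M^{\nu_2[\mu]}\bigr)\big\uparrow$ (splitting the outer shape), and an explicit injection $\bigl(M^{\mu[\nu_1]}\otimes M^{\mu[\nu_2]}\bigr)\big\uparrow\hookrightarrow M^{\mu[\nu_1\sqcup\nu_2]}$ (merging inner shapes). It then tensors the two hypothesized injections $M^{\nu_i[\mu]}\hookrightarrow M^{\mu[\nu_i]}$, induces up, and composes. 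In symmetric-function terms, the paper's ``inner merge'' injection is precisely the Schur-positivity of your $A$, but proved by writing down a concrete $\s_{nm}$-map on plethystic tabloids rather than by invoking Cauchy. What the paper gains is an explicit injective homomorphism, which feeds into its later constructions (the generalized Foulkes--Howe map and hoped-for propagation theorems). What you gain is brevity and independence from the combinatorics of plethystic tabloids: your proof needs nothing beyond classical symmetric-function identities and would sit comfortably in a couple of paragraphs without Section~\ref{construction}.
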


In section 4, we construct the generalized Foulkes-Howe map $\mathcal{F}_{\mu,\nu}: M^{\mu[\nu]} \rightarrow M^{\nu[\mu]}$. Through sections 3 and 4, we prove the following results:

\begin{thm}\label{bigthm}
\begin{enumerate}[label=(\alph*)]
\item For any positive integer $n$ and partition $\mu$, $\mu^n \trianglelefteq \mu$.
\item If the Foulkes’ conjecture is true up to $n-1$ (\textrm{i.e.} $(n-1) \trianglelefteq (m)$ for all $m \geq n-1$), then for any partition $\mu$ such that $\mu_1 \leq n$, $\mu \trianglelefteq (n)$.
\item For any positive integer $n$ and partition $\mu$, $(1^n) \trianglelefteq \mu$. 
\item For any positive integer $n$ and partition $\mu$, $\mu \sqcup (1^n) \trianglelefteq \mu$. 
\end{enumerate}
\end{thm}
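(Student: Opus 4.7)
My plan is to derive all four parts by combining Proposition~\ref{injmorphism} with three basic ingredients: reflexivity $\mu \trianglelefteq \mu$, the trivial relation $(1) \trianglelefteq \mu$, and (for part (b)) the assumed Foulkes hypothesis. The relation $(1) \trianglelefteq \mu$ holds because $h_\mu[h_1] = h_\mu = h_1[h_\mu]$: the difference vanishes, and the identity on $M^\mu$ realizes the required injection $M^{(1)[\mu]} \hookrightarrow M^{\mu[(1)]}$. Reflexivity is immediate for the same reason.

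Given these, I would deduce parts (a), (c), (d) by iterating Proposition~\ref{injmorphism}. For (c), applying the proposition $n-1$ times to $(1) \trianglelefteq \mu$ yields $(1^n) = (1) \sqcup \cdots \sqcup (1) \trianglelefteq \mu$. For (a), the same iteration starting from $\mu \trianglelefteq \mu$ produces $\mu^n = \mu \sqcup \cdots \sqcup \mu \trianglelefteq \mu$. For (d), combining $(1^n) \trianglelefteq \mu$ from (c) with $\mu \trianglelefteq \mu$ via Proposition~\ref{injmorphism} gives $\mu \sqcup (1^n) \trianglelefteq \mu$. Part (b) is only slightly more involved: I would decompose $\mu = \bigsqcup_i (\mu_i)$ into singleton parts, observe that $\mu_i \leq n$ for every $i$ by hypothesis, handle $\mu_i = n$ by reflexivity and $\mu_i \leq n-1$ by the Foulkes hypothesis, and then assemble via Proposition~\ref{injmorphism}.

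The hard part in this plan is not the assembly but Proposition~\ref{injmorphism} itself, whose module-theoretic proof in Section~3 must explicitly construct equivariant injections for disjoint unions. A secondary delicacy affects (b): my argument reads ``Foulkes up to $n-1$'' as the full collection $(k) \trianglelefteq (m)$ for $k \leq n-1$ and $m \geq k$. If only the specific case $k = n-1$ is available, then singleton parts $\mu_i < n-1$ are not directly covered, and one would likely need to bootstrap using the generalized Foulkes-Howe map $\mathcal{F}_{(\mu_i),(n)}$ from Section~4 to close the gap.
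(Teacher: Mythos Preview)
Your proposal is correct, and for parts (a), (b), and (d) it matches the paper's arguments essentially step for step: the paper likewise inducts on $\mu^n = \mu^{n-1} \sqcup \mu$ for (a), decomposes $\mu = (\mu_1) \sqcup \cdots \sqcup (\mu_k)$ and invokes Foulkes on each one-part piece for (b), and combines $\mu \trianglelefteq \mu$ with (c) via Proposition~\ref{injmorphism} for (d). Your caveat about the reading of ``Foulkes up to $n-1$'' in (b) is well taken; the paper's own proof tacitly uses $(\mu_i) \trianglelefteq (n)$ for every $\mu_i < n$, so it too relies on the broader reading.

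The genuine divergence is in part (c). The paper does \emph{not} obtain $(1^n) \trianglelefteq \mu$ by iterating Proposition~\ref{injmorphism} from $(1) \trianglelefteq \mu$; instead it builds the generalized Foulkes--Howe map $\mathcal{F}_{(1^n),\mu}$ in Section~4 and argues directly that this particular map is injective, by observing that from any plethystic tabloid in the image one can uniquely recover the source. Your route is strictly more elementary: the base case $(1) \trianglelefteq \mu$ is trivial (both modules are $M^\mu$), and then Proposition~\ref{injmorphism} does all the work, so Section~4 is not needed at all for Theorem~\ref{bigthm}. What the paper's approach buys is an explicit instance where the generalized Foulkes--Howe map itself is provably injective, which is of independent interest since that map is the paper's main candidate for witnessing $\nu \trianglelefteq \mu$ in general; your approach buys economy and shows that (c) is really a formal consequence of Proposition~\ref{injmorphism} alone.
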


These results give us some structure on $''\trianglelefteq''$. In effect, part 1 and 4 mean that it is an order on the sets $\{ \mu \sqcup (1^k) \ | \ k \in \N\}$ and $\{ \mu^{a^k} \ | \ k \in \N\}$ for any partition $\mu$ and integer $a$. Part 2 mean gives a maximal element for the subset of partitions with parts bounded by an integer $n$. Finally, and most importantly, part 3 tells us that we need to either exclude colum partitions or let them form a join minimum.

In section \ref{ssh}, we describe semistandard homomorphisms as in \cite{James}, and we show how to use them to study the modules $M^{\nu[\mu]}$. Using these morphisms, we prove the following stability properties in section \ref{results2}:

\begin{thm}\label{thm1}
Let $\nu, \mu, \tilde{\mu}$ and $\lambda$ be partitions, where $|\nu| = n$, $|\mu|=m$, $|\lambda|=nm$ and $|\tilde{\mu}|=\tilde{m}$. If $a_{\nu[\mu]}^{\lambda}=r$, then $a_{\nu[\mu+\tilde{\mu}]}^{\lambda+(n\tilde{m})} \geq r$. 
\end{thm}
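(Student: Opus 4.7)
The plan is to use the theory of semistandard homomorphisms developed in section~\ref{ssh} to parametrize a basis of $\HOM_{\s_{nm}}(S^{\lambda}, M^{\nu[\mu]})$ by combinatorial objects (``semistandard plethystic tableaux''), and then to exhibit an explicit injection from the set indexing the multiplicity $a_{\nu[\mu]}^{\lambda}$ into the analogous set indexing $a_{\nu[\mu+\tilde{\mu}]}^{\lambda + (n\tilde{m})}$.

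The first step is to identify $a_{\nu[\mu]}^{\lambda} = \dim \HOM_{\s_{nm}}(S^{\lambda}, M^{\nu[\mu]})$, using that the Frobenius character of $M^{\nu[\mu]}$ is $h_\nu[h_\mu]$. Then, by the plethystic version of James's semistandard basis theorem recalled in section~\ref{ssh}, this Hom space has a basis indexed by fillings of the outer shape $\lambda$ distributed among $n$ inner tabloids of shape $\mu$, respecting the $\nu$-block structure and satisfying row-weak/column-strict conditions compatible with the plethystic ordering on entries.

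The second step is to define the injection $T \mapsto T'$. Given a basis object $T$ for $(\lambda, \nu, \mu)$, build $T'$ for $(\lambda + (n\tilde{m}), \nu, \mu + \tilde{\mu})$ by enlarging each of the $n$ inner tabloids from shape $\mu$ to shape $\mu + \tilde{\mu}$, adding $\tilde{\mu}_i$ new ``row-$i$'' entries to each. The total number of new entries is $\sum_i n\tilde{\mu}_i = n\tilde{m}$, matching exactly the $n\tilde{m}$ new cells appended to the first row of $\lambda$. Placing these new entries in the appended portion of row $1$ in a canonical sorted order yields an object $T'$ of the required shape. The map $T \mapsto T'$ is injective because $T$ can be recovered by deleting the newly inserted entries from both the outer shape and the inner tabloids.

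The main obstacle is verifying that $T'$ actually satisfies the semistandardness conditions in the plethystic setting. The appended cells in the first row of $\lambda + (n\tilde{m})$ each form a column of height $1$ in the new shape, so there is no column constraint to check for those cells; it suffices to ensure that the inserted block is weakly increasing and sits consistently after the previous entries of row $1$ under the adopted order on plethystic entries. The subtler point is the plethystic compatibility — the condition which encodes how the $n$ inner tabloids of shape $\mu$ are organized by $\nu$ — which must be preserved when each tabloid is uniformly enlarged by $\tilde{\mu}$. Once this combinatorial verification is carried out, the existence of the injection gives $a_{\nu[\mu + \tilde{\mu}]}^{\lambda + (n\tilde{m})} \geq a_{\nu[\mu]}^{\lambda} = r$, as desired.
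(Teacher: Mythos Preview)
Your proposal has a genuine gap at its first step. You invoke a ``plethystic version of James's semistandard basis theorem recalled in section~\ref{ssh}'', but no such theorem is stated or proved there. What section~\ref{ssh} actually establishes is that the maps $\overline{\Theta}_{\tau} = \phi \circ \Theta_{\tau}$, for $\tau$ semistandard of shape $\lambda$ and content $\mu^n$, form a \emph{generating set} of $\HOM_{\s_{nm}}(S^{\lambda}, M^{\nu[\mu]})$; the paper explicitly says that ``finding such a basis is a hard problem''. In other words, there is no known combinatorial indexing set of ``semistandard plethystic tableaux'' whose cardinality equals $a_{\nu[\mu]}^{\lambda}$, so your plan of defining an injection between such index sets cannot get off the ground. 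Your description of the objects (``fillings of $\lambda$ distributed among $n$ inner tabloids, respecting the $\nu$-block structure'') does not correspond to anything available.

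The paper circumvents exactly this obstacle. It extracts from the generating set an arbitrary basis $\{\overline{\Theta}_{\tau_1},\ldots,\overline{\Theta}_{\tau_r}\}$, then forms $\widehat{\tau}_i = \tau_i \vee \widetilde{\tau}$ (joining each $\tau_i$ with the unique one-row semistandard tableau of content $\widetilde{\mu}^n$) and proves that $\{\overline{\Theta}_{\widehat{\tau}_1},\ldots,\overline{\Theta}_{\widehat{\tau}_r}\}$ is linearly independent in $\HOM_{\s_{n(m+\tilde m)}}(S^{\lambda+(n\tilde m)}, M^{\nu[\mu+\tilde\mu]})$. The substance of the argument is Lemma~\ref{lemma1}: for a well-chosen plethystic tabloid $\{\widehat{\bm{T}}\}$, its coefficient in $\overline{\Theta}_{\widehat{\tau}}(e(\widehat{t}))$ equals the coefficient of the corresponding $\{\bm{T}\}$ in $\overline{\Theta}_{\tau}(e(t))$, because $C_{\widehat{t}} = C_t$ (the appended cells are all in the first row) and the contributing pairs $(\widehat{\tau}',\widehat{\pi})$ are in bijection with the pairs $(\tau',\pi)$. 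This coefficient-matching is what transports a linear relation among the $\overline{\Theta}_{\widehat{\tau}_i}$ back to a relation among the $\overline{\Theta}_{\tau_i}$. Your intuition that appending $n\tilde m$ new cells to the first row introduces no new column constraints is exactly the reason $C_{\widehat{t}} = C_t$, but the argument must be carried out at the level of homomorphisms and their coefficients, not combinatorial basis objects.
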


\begin{thm}\label{thm2}
Let $\nu, \mu$ and $\lambda$ be partitions, where $|\nu| = n$, $|\mu|=m$, $|\lambda|=nm$ and $\ell(\mu)=\widetilde{m}$. If $a_{\nu[\mu]}^{\lambda}=r$, then $a^{\lambda+(2^{n\widetilde{m}})}_{\nu[\mu+(2^{\widetilde{m}})]} \geq r$.
\end{thm}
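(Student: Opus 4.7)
The plan, parallel to the proof of Theorem~\ref{thm1}, is to work directly at the level of the plethystic semistandard homomorphisms developed in section~\ref{ssh}. Recall from there that
\[
a^{\lambda}_{\nu[\mu]} = \dim \HOM_{\s_{nm}}\bigl(S^\lambda,\, M^{\nu[\mu]}\bigr),
\]
and that this Hom space has a basis indexed by a distinguished combinatorial set of ``plethystic semistandard tableaux'' of shape $\lambda$ whose content is governed by $\nu[\mu]$. To prove the stated inequality it therefore suffices to construct an injection from the parameter set for $(\lambda, \nu[\mu])$ into the analogous parameter set for $(\lambda+(2^{n\widetilde{m}}), \nu[\mu+(2^{\widetilde{m}})])$.

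The injection $T \mapsto T'$ proceeds by grafting the $2n\widetilde{m}$ new cells of the augmented outer shape onto $T$ with carefully chosen new letters. Since $\mu+(2^{\widetilde{m}})$ differs from $\mu$ by exactly two cells in each of its $\widetilde{m}$ rows, and the plethysm with $\nu$ of size $n$ duplicates this increment across $n$ blocks, the extended alphabet contains exactly $2n\widetilde{m}$ new letters---two per row of each of the $n$ blocks. We place these letters into the new cells of $\lambda+(2^{n\widetilde{m}})$ block-by-block, row-by-row within a block, assigning the two new letters associated to a given row of $\mu+(2^{\widetilde{m}})$ to the pair of new cells coming from the corresponding block. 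Taking the new letters to be minimal in the extended alphabet ensures they fit at the top-left of the augmented diagram without displacing any existing entry of $T$. Injectivity is immediate: erasing the new cells of $T'$ returns $T$ together with its content data.

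The crux of the argument, and what I expect to be the main obstacle, is to check that $T'$ is a valid plethystic semistandard tableau of shape $\lambda+(2^{n\widetilde{m}})$. Ordinary row-weak and column-strict monotonicity follow from the choice of the new letters as minimal: within each new column the entries increase strictly in the block/row order, and each new letter is at most every existing entry in its row of $T$. The subtler verification concerns the wreath-product structural condition coming from $M^{\nu[\mu+(2^{\widetilde{m}})]}$: one must show that the new cells align with the $n$-block decomposition of $T$ into $\mu$-shaped blocks and extend it to a decomposition of $T'$ into $\mu+(2^{\widetilde{m}})$-shaped blocks compatible with the action of $\s_{m+2\widetilde{m}}\wr\s_n$. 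The block-by-block grafting described above is designed precisely for this compatibility, but making it rigorous requires a careful matching with the semistandardness conditions of section~\ref{ssh}; this is where the bulk of the bookkeeping resides, after which the desired multiplicity bound $a^{\lambda+(2^{n\widetilde{m}})}_{\nu[\mu+(2^{\widetilde{m}})]} \geq r$ follows.
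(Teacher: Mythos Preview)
Your plan rests on a premise that the paper does not provide and that is, in fact, the whole difficulty. You assert that $\HOM_{\s_{nm}}(S^\lambda,M^{\nu[\mu]})$ ``has a basis indexed by a distinguished combinatorial set of plethystic semistandard tableaux,'' and then reduce the theorem to an injection of indexing sets. But section~\ref{ssh} says precisely the opposite: the homomorphisms $\overline{\Theta}_\tau$ for semistandard $\tau\in T(\lambda,\mu^n)$ are only a \emph{generating} set of this Hom space, and the paper explicitly states that extracting a basis from them ``is a hard problem.'' There is no known combinatorial parameter set whose cardinality equals $a^\lambda_{\nu[\mu]}$, so there is nothing for your injection $T\mapsto T'$ to act on. Your construction also speaks of ``$2n\widetilde m$ new letters,'' whereas the tableaux in $T(\lambda,\mu^n)$ use the alphabet $\{1,\dots,n\widetilde m\}$ and the passage to $\mu+(2^{\widetilde m})$ does not enlarge this alphabet at all---it only increases the multiplicities; so the picture you sketch does not match the objects in section~\ref{ssh}.

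What the paper actually does is work at the level of the homomorphisms themselves. Given an arbitrary basis $\{\overline{\Theta}_{\tau_1},\dots,\overline{\Theta}_{\tau_r}\}$ of the Hom space (with no combinatorial handle on which $\tau_i$ occur), one forms $\widehat{\tau}_i=\widetilde{\tau}\vee\tau_i$, where $\widetilde{\tau}$ is the tableau of shape $(2^{n\widetilde m})$ with both entries in row $r$ equal to $r$. The substance of the proof is the lemma showing that the coefficient of a suitably chosen plethystic tabloid $\{\widehat{\bm T}\}$ in $\overline{\Theta}_{\widehat{\tau}}(e(\widehat{t}))$ equals the coefficient of $\{\bm T\}$ in $\overline{\Theta}_{\tau}(e(t))$; the key step is that the relevant column permutation $\widetilde{\pi}\in C_{\widetilde t}$ is forced to act identically on the two added columns and is therefore even. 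This coefficient identity is what transports linear independence from $\{\overline{\Theta}_{\tau_i}\}$ to $\{\overline{\Theta}_{\widehat{\tau}_i}\}$, exactly as in the proof of Theorem~\ref{thm1}. That argument---not a bijection of indexing sets---is the missing idea in your proposal.
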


We conclude with some perspectives for future research about this order.

\section{Description of the modules}\label{construction}

\subsection{Representation theory of the symmetric group}

As a means of deriving properties of plethysms, we consider representations of the symmetric group $\s_n$. The necessary link between the two subjects of study is established via the classical Frobenius transform of characters. This is an isometry which encodes a character as a symmetric function, with the property that irreducible characters are sent to the basis of Schur symmetric function. Using this map, questions about plethysm can be answered using the representation theory of the symmetric group. For an introduction to symmetric functions, see \cite{Macdonald}. We recall here some classical constructions and results about the representation theory of the symmetric group, which may be found for example in \cite{Sagan} or \cite{James}. For a polynomial approach, see \cite{Bergeron}. 

A \textit{partition} is a weakly decreasing sequence of positive integers $\lambda = (\lambda_1, \lambda_2, \ldots)$. If $\sum_i \lambda_i = n$, we say that it is a partition of $n$, denoted $\lambda \vdash n$ or $|\lambda|=n$. The \textit{length} of $\lambda$, denoted $\ell(\lambda)$, is the number of parts of $\lambda$. The \textit{Ferrers diagram} of $\lambda$ is the set $\{ (c,r) \in \mathbb{N} \times \mathbb{N} \ | \ 1 \leq r \leq \ell(\lambda), \ 1 \leq c \leq \lambda_r\}$. We denote $\lambda$ both the partition and the Ferrers diagram. An element $(c,r) \in \lambda$ is called a \textit{cell} of $\lambda$, and is geometrically represented in $\mathbb{N} \times \mathbb{N}$ by the \textit{box} having vertices $(i,j)$, $(i-1,j)$, $(i,j-1)$ and $(i-1,j-1)$ (as in French notation, but with a different numbering of the cells). We say that $(c,r)$ is in the $r^{th}$ \textit{row} and the $c^{th}$ column of $\lambda$. We order the cells with the reverse lexicograpical order $<_{\text{rlex}}$, meaning that $(c,r) <_{\text{rlex}} (c',r')$ if $r < r'$ or $r=r'$ and $c < c'$.

Let $\Omega$ be a set. A \textit{tableau} $\tau$ of \textit{shape} $\lambda$ with entries in $\Omega$ is a map $\tau: \lambda \rightarrow \Omega$. It is often displayed as a filing of the cells of $\lambda$. The $\mathbb{C}$-vector space formally spanned by these tableaux is a $\s_{\Omega}$-module, where $(\rho \cdot \tau)(c,r) = \rho(\tau(c,r))$ for $\rho \in \s_{\Omega}$. If, for $a \in \Omega$, we denote $\gamma_a = |\tau^{-1}(a)|$,and the sequence $\gamma = (\gamma_a)_{a \in \Omega}$ is called the \textit{content} of $\tau$.

If $|\Omega| = |\lambda|=n$, we may consider the set of bijective tableaux $t: \lambda \rightarrow \Omega$, usually denoted by lowercase Latin letters.  Unless specified, we choose $\Omega$ to be the set $\{1,...,n\}$, so that $\s_{\Omega} = \s_n$, but we can make naturally isomorphic constructions for any set of size $n$.   Let $R_{t}$ (resp. $C_{t}$) be the subgroup of $\s_{n}$ which only permutes elements lying in a same row (resp. column) of $t$. Consider the equivalence class $\sim_R$ on bijective tableaux of shape $\lambda$, where $t_1 \sim_R t_2$ if there exists $\sigma \in R_{t_1}$ such that $\sigma \cdot t_1 = t_2$. In that case, $R_{t_1}=R_{t_2}$. A \textit{tabloid} $\{t\}$ is the equivalence class of $t$ under $\sim_R$. Visually, we represent a tabloid by suppressing vertical lines of the diagram. The $\C$-span of these tabloids is called the \textit{permutation module} associated to $\lambda$ and denoted $M^{\lambda}$. In this $\s_n$-module, we define the elements $e(t) = \displaystyle \sum_{\pi \in C_{\lambda}} \sgn(\pi) \{t\}$, called \textit{polytabloids}. It can be shown that for all $\sigma \in \s_n$, $\sigma \cdot e(t) = e(\sigma \cdot t)$. So, the $\mathbb{C}$-span of the polytabloids of shape $\lambda$ is a $\s_n$-module, called the \textit{Specht module} associated to $\lambda$ and denoted $S^{\lambda}$.  We recall some well-known facts about these modules (see Sagan, \cite{Sagan}):

\begin{prop}
\begin{itemize}
\item The set $\{S^{\lambda} \ | \ \lambda \vdash n\}$ is a complete list of non-isomorphic irreducible representations of $\s_n$; 
\item The Frobenius transform of the character of $M^{\lambda}$ is the complete homogeneous symmetric function $h_{\lambda}$;
\item The Frobenius transform of the character of $S^{\lambda}$ is the Schur function $s_{\lambda}$.
\end{itemize}
\end{prop}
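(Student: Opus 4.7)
The plan is to reduce all three statements to well-known properties of the Frobenius characteristic map combined with the combinatorial structure of Specht modules, following closely the treatment in \cite{Sagan}. The central fact is that the Frobenius map is a ring isomorphism from the graded ring of characters of the symmetric groups to the ring of symmetric functions; it sends the trivial character of $\s_k$ to $h_k$, and it intertwines induction from Young subgroups with the product of symmetric functions.

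For the second bullet, I would identify $M^\lambda$ with the induced representation $\mathrm{Ind}_{\s_\lambda}^{\s_n}(\mathbf{1})$, where $\s_\lambda = \s_{\lambda_1} \times \cdots \times \s_{\lambda_{\ell(\lambda)}}$ is the stabilizer of any fixed tabloid of shape $\lambda$; the $\s_n$-action on tabloids is transitive, so $M^\lambda$ is precisely the associated coset representation. The Frobenius image is then $h_{\lambda_1} \cdots h_{\lambda_{\ell(\lambda)}} = h_\lambda$ by the multiplicativity property recalled above.

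For the first and third bullets, the key intermediate step is James's submodule theorem: equipping $M^\lambda$ with the bilinear form that makes distinct tabloids orthonormal, every $\s_n$-submodule $U \subseteq M^\lambda$ either contains $S^\lambda$ or is contained in $(S^\lambda)^\perp$. This rests on the combinatorial lemma that $\kappa_t \cdot \{s\}$ equals $\pm e(t)$ or $0$, where $\kappa_t = \sum_{\pi \in C_t} \sgn(\pi)\pi$, according to whether every column of $t$ meets every row of $s$ in at most one cell. Irreducibility of $S^\lambda$ is then immediate. Combined with Young's rule, which gives the decomposition $M^\lambda \cong \bigoplus_\mu K_{\mu\lambda}\, S^\mu$ with $K_{\mu\lambda}$ the number of semistandard Young tableaux of shape $\mu$ and content $\lambda$, nonzero only when $\mu$ dominates $\lambda$ and equal to $1$ when $\mu = \lambda$, one obtains pairwise non-isomorphism of the $S^\lambda$. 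Since $\s_n$ has exactly $p(n)$ conjugacy classes, the list is complete. Applying the Frobenius map to this decomposition and invoking the second bullet yields
\[
h_\lambda \;=\; \sum_\mu K_{\mu\lambda}\, \mathrm{Frob}(S^\mu);
\]
comparing with the classical identity $h_\lambda = \sum_\mu K_{\mu\lambda}\, s_\mu$ of \cite{Macdonald} and using the unitriangularity of the Kostka matrix with respect to dominance, induction on $\mu$ forces $\mathrm{Frob}(S^\mu) = s_\mu$.

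The main technical obstacle is the submodule theorem itself, and in particular the combinatorial lemma about $\kappa_t \cdot \{s\}$; once it is in hand, everything else reduces to elementary linear algebra and standard identities in the ring of symmetric functions. As these results are entirely classical, I would content myself with the sketch above and refer the reader to \cite{Sagan} or \cite{James} for the detailed verifications.
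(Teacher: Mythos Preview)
Your sketch is correct and follows the classical route found in \cite{Sagan} and \cite{James}. Note, however, that the paper does not actually prove this proposition: it is stated as recalled background with a bare citation to Sagan, so there is no argument in the paper to compare against. Your proposal is thus strictly more detailed than what the paper provides, and since it ultimately defers to the same standard references, it is fully consistent with the paper's treatment.
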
   

\subsection{Wreath product} \label{Wreath product}

For $n,m \in \mathbb{N}$, let $G_1,G_2,\ldots,G_n$ be subgroups of $\s_m$ and $H$ be a subgroup of $\s_n$. Also, let $f: H \rightarrow \text{Aut}(G_1 \times G_2 \times \ldots \times G_n)$ be the morphism such that for every $h \in H$, $f(h)$ sends $(g_1, ..., g_n) \in G_1 \times G_2 \times \ldots \times G_n$ to $(g_{h^{-1}(1)}, ..., g_{h^{-1}(n)})$. This defines a semidirect product $(G_1 \times \ldots \times G_n) \rtimes_f H$. When $G_1 = G_2 = \ldots = G_n = G$, it is called \textit{wreath product} of $G$ with $H$, denoted $G \wr H$.

Let $V$ be a $G$-module and $W$ be a $H$-module. The tensor product $V^{\otimes n}$ is naturally a $G^n$-module, and it is also a $H$-module, where $H$ acts by permuting the components of a tensor. The interaction of the two actions correspond to the wreath product, so it is in fact a $G\wr H$-module. Also, there is a canonical surjection $G \wr H \twoheadrightarrow H$, so we can construct the inflated $G \wr H$-module $\infl_{H}^{G \wr H} W$. We then define $V \oslash W$ to be the $G \wr H$-module given by the diagonal action on $V^{\otimes n} \otimes \infl_{H}^{G \wr H} W$. When $G= \s_m$ and $H = \s_n$, the operation $\oslash$ mimics the plethysm of symmetric functions, in the following sense \cite{PagetWildon} :

\begin{prop}
Let $V$ be a $\s_m$-module such that the Frobenius transform of its character is $f$, and let $W$ be a $\s_n$-module such that the Frobenius transform of its character is $g$. Then, the Frobenius transform of the character of $(V \oslash W)\big\uparrow_{\s_m \wr \s_n}^{\s_{nm}}$ is the plethysm $g[f]$.
\end{prop}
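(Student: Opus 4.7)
The plan is to compute both sides in the power-sum basis and match them term by term. On the plethysm side, one has the standard expansion $g[f]=\sum_{\rho\vdash n}z_\rho^{-1}\chi_W(\rho)\prod_i p_{\rho_i}[f]$, with $p_k[f]=\sum_{\mu\vdash m}z_\mu^{-1}\chi_V(\mu)\,p_{k\mu}$, both of which follow from $p_k[p_\mu]=p_{k\mu}$ and the linearity of plethysm in its outer argument. On the representation-theoretic side, Frobenius reciprocity gives
\[
\mathrm{Frob}\bigl(\chi_{(V\oslash W)\uparrow}\bigr)\;=\;\frac{1}{|\s_m\wr\s_n|}\sum_{x\in\s_m\wr\s_n}\chi_{V\oslash W}(x)\,p_{\mathrm{type}_{\s_{nm}}(x)},
\]
where $\mathrm{type}_{\s_{nm}}(x)$ denotes the cycle type of $x$ under the natural embedding $\s_m\wr\s_n\hookrightarrow\s_{nm}$. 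The strategy is to reduce the second expression to the first.

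The first ingredient is the classical wreath-product character formula: for $x=((g_1,\ldots,g_n),h)$,
\[
\chi_{V\oslash W}(x)\;=\;\chi_W(h)\prod_{\substack{c=(i_1,\ldots,i_k)\\ \text{cycle of }h}}\chi_V\bigl(g_{i_k}\cdots g_{i_1}\bigr),
\]
which I would derive by choosing a basis of $V^{\otimes n}$ aligned with the cycle decomposition of $h$ and computing the trace block by block, each cycle of length $k$ contributing a single factor equal to the trace of its cycle-product on one copy of $V$. The second ingredient is a cycle-counting lemma: if a cycle $c$ of $h$ has length $k_c$ and the corresponding cycle-product $g_c$ has cycle type $\mu^{(c)}\vdash m$, then the portion of $x\in\s_{nm}$ supported on $c\times\{1,\ldots,m\}$ has cycle type $(k_c\mu^{(c)}_j)_j$, so $p_{\mathrm{type}_{\s_{nm}}(x)}=\prod_c p_{k_c\mu^{(c)}}$.

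Substituting and reparameterizing: for each cycle $c$ of $h$, the tuple $(g_i)_{i\in c}$ can be chosen so that $g_c$ ranges freely over $\s_m$, with $(m!)^{k_c-1}$ choices giving each value of $g_c$. Grouping by cycle type identifies $\sum_{g\in\s_m}\chi_V(g)\,p_{k_c\cdot\mathrm{type}(g)}=m!\cdot p_{k_c}[f]$. Multiplying $(m!)^{k_c-1}\cdot m!=(m!)^{k_c}$ over all cycles of $h$ yields $(m!)^n$, which cancels against the factor $|\s_m\wr\s_n|^{-1}=(m!)^{-n}(n!)^{-1}$, leaving
\[
\frac{1}{n!}\sum_{h\in\s_n}\chi_W(h)\prod_c p_{k_c}[f]\;=\;\sum_{\rho\vdash n}\frac{\chi_W(\rho)}{z_\rho}\,p_\rho[f]\;=\;g[f].
\]

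The main technical point is the careful accounting of the $(m!)^{k_c-1}$ reparameterization factor and its cancellation with the $(m!)^{-n}$ normalization from the induced character formula; once this is correctly tracked, the identification with $g[f]$ is immediate. The only non-trivial plethystic identity used is $p_k[p_\mu]=p_{k\mu}$, and everything else is standard character theory.
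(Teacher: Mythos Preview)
The paper does not actually prove this proposition; it merely attributes it to Paget--Wildon \cite{PagetWildon} and moves on. Your proposal therefore supplies strictly more than the paper does, and the argument you outline is correct and complete. It is the classical power-sum computation (essentially the one in Macdonald, Chapter~I, Appendix~A): the induced-character/Frobenius formula, the wreath-product character formula $\chi_{V\oslash W}((g_1,\dots,g_n),h)=\chi_W(h)\prod_c\chi_V(g_c)$, the cycle-type identity $\mathrm{type}_{\s_{nm}}(x)=\bigsqcup_c k_c\cdot\mu^{(c)}$, and the reparameterization giving the factor $(m!)^{k_c-1}$ per cycle all check out, and the cancellation against $|\s_m\wr\s_n|=(m!)^n n!$ leaves exactly $\sum_{\rho}z_\rho^{-1}\chi_W(\rho)\,p_\rho[f]=g[f]$.

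One small point of phrasing: when you expand $p_k[f]$ you are using that the map $p_k[\cdot]$ (substitution $p_j\mapsto p_{kj}$) is an algebra homomorphism, not merely ``linearity in the outer argument''; you might want to say that explicitly, since plethysm is famously \emph{not} linear in its inner argument in general, and a reader could balk at that step. With that clarification the proof is airtight.
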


So, if we want to study the plethysms $h_{\nu}[h_{\mu}]$, we can do it by studying the modules $(M^{\mu} \oslash M^{\nu­})\big\uparrow_{\s_m \wr \s_n}^{\s_{nm}}$.

\subsection{The modules \texorpdfstring{$M^{\mu[\nu]}$}{Mmn}}

We now describe a combinatorial description of the modules $(M^{\mu} \oslash M^{\nu­})\big\uparrow_{\s_m \wr \s_n}^{\s_{nm}}$ for $\mu \vdash m$ and $\nu \vdash n$, described in \cite{PagetWildon} as an intermediary step in the description of their modules. Let $\mathcal{M}_{\mu}$ be the set of injective tableaux $t: [\mu] \rightarrow \{1,2,\ldots,nm\}$. Consider injective tableaux $\bm{T} : [\nu] \rightarrow \mathcal{M}_{\mu}$ such that the union of the entries of each entry $t$ of $\bm{T}$ is $\{1,2,\ldots,nm\}$. We say that such a tableau is a \textit{plethystic tableau} of shape $\nu[\mu]$. Thus, a plethystic tableau of shape $\nu[\mu]$ is a tableau of shape $\nu$ (called the \textit{outer tableau} of $\bm{T}$) whose entries are tableaux of shape $\mu$ (called the \textit{inner tableaux} of $\bm{T}$) and such that each number from 1 to $nm$ appears in exactly one tableau of shape $\mu$. By permuting all the entries at once, the $\mathbb{C}$-span of the plethystic tableaux of shape $\nu[\mu]$ is a $\s_{nm}$-module.

An example of a plethystic tabloid of shape $(2,1)[(2,2)]$ is
\[
\bm{T}=
\ytableausetup{notabloids, aligntableaux=center, boxsize=4em}
\begin{ytableau}
\tydu{1}{5}{4} & \tydu{2}{6}{9} \\
\tydu{3}{10}{12} & \tydu{9}{11}{7}
\end{ytableau} .
\]
Let $t_i$ be the $i^{th}$ inner tableau of $\bm{T}$ (for the order $<_{\text{rlex}}$), and define the group $\bm{R}_{\bm{T}} := (R_{t_1} \times R_{t_2} \times \ldots \times R_{t_n}) \rtimes_f R_{\bm{T}}$ (as in section \ref{Wreath product}). We can define an equivalence relation $\sim_{\bm{R}}$ on plethystic tableaux of shape $\nu[\mu]$, where $\bm{T}_1 \sim_{\bm{R}} \bm{T}_2$ if there exists $\sigma \in \bm{R}_{\bm{T}_1}$ such that $\sigma \cdot \bm{T}_1 = \bm{T}_2$. A \textit{plethystic tabloid} $\{\bm{T}\}$ is the equivalence class of $\bm{T}$ under $\sim_{\bm{R}}$. Visually, we represent it by supressing vertical lines of the outer tableau  and the inner tableaux. We can think of a plethystic tabloid as a "tabloid of tabloids". We also write $\bm{T} = (\{t_1\},\ldots,\{t_n\})_{\nu}$. For example, if $\bm{T}$ is defined as above, then
\[
\{\bm{T}\} =
\ytableausetup{tabloids, aligntableaux=center, boxsize=4em}
\begin{ytableau}
\tbddu{1}{5}{4} & \tbddu{2}{6}{9} \\
\tbddu{3}{10}{12} & \tbddu{9}{11}{7}
\end{ytableau} = \left( \ \tbddu{1}{5}{4} \ , \ \tbddu{2}{6}{9} \ , \ \tbddu{3}{10}{12} \ , \ \tbddu{9}{11}{7} \ \right)_{(2,2)}
\]
This plethystic tabloid is invariant under the permutation $(1,5)(9,11)$, because it permutes pairs of numbers that are in the same row of an inner tableau. It is also invariant under the permutation $(3,9)(10,11)(7,12)$, because this permutation exchanges completely two inner tableaux that are in the same row of the outer tableau. 

Let $M^{\nu[\mu]}$ be the $\s_{nm}$-module generated by the plethysitic tabloids of shape $\nu[\mu]$. 

We have the following result, which is implicit in \cite{PagetWildon}:

\begin{prop}
The representation $M^{\nu[\mu]}$ is isomorphic to the representation $(M^{\mu} \oslash M^{\nu­})\big\uparrow_{\s_m \wr \s_n}^{\s_{nm}}$.
\end{prop}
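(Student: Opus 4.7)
The plan is to identify both sides with the same induced representation $\text{Ind}_{Y_\mu \wr Y_\nu}^{\s_{nm}}\mathbf{1}$, where $Y_\mu$ and $Y_\nu$ are the standard Young subgroups (row stabilizers of canonical bijective tableaux), $Y_\mu \wr Y_\nu$ is viewed as a subgroup of $\s_m \wr \s_n \subset \s_{nm}$, and $\mathbf{1}$ denotes the trivial representation. The underlying tool throughout is the basic fact $\mathbb{C}[G/H] \cong \text{Ind}_H^G \mathbf{1}$, together with transitivity of induction.

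First I would fix a canonical plethystic tabloid $\{\bm{T}_0\}$, with outer tabloid filled canonically by $\{1,\ldots,n\}$ and with $i$-th inner tabloid (in reverse-lex order) filled canonically with the block $\{(i-1)m+1,\ldots,im\}$. Since $\s_{nm}$ acts transitively on all plethystic tabloids of shape $\nu[\mu]$, the module $M^{\nu[\mu]}$ is cyclic with generator $\{\bm{T}_0\}$, and its stabilizer is by definition $\bm{R}_{\bm{T}_0}$. Under the embedding $\s_m \wr \s_n \hookrightarrow \s_{nm}$ determined by the chosen blocks, this stabilizer coincides with $Y_\mu \wr Y_\nu$, giving $M^{\nu[\mu]} \cong \text{Ind}_{Y_\mu \wr Y_\nu}^{\s_{nm}}\mathbf{1}$.

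Second, for the induced wreath module, I would use the standard identifications $M^\mu \cong \text{Ind}_{Y_\mu}^{\s_m}\mathbf{1}$ and $M^\nu \cong \text{Ind}_{Y_\nu}^{\s_n}\mathbf{1}$ to prove the intermediate isomorphism $M^\mu \oslash M^\nu \cong \text{Ind}_{Y_\mu \wr Y_\nu}^{\s_m \wr \s_n}\mathbf{1}$ as $(\s_m \wr \s_n)$-modules. Tensor powers of induced modules are themselves induced, so $(M^\mu)^{\otimes n} \cong \text{Ind}_{Y_\mu^n}^{\s_m^n}\mathbf{1}$; the inflated trivial $Y_\nu$-representation is the trivial $(Y_\mu^n \rtimes Y_\nu)$-representation; and the diagonal tensor defining $\oslash$ then reassembles into the trivial representation of the wreath subgroup. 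Transitivity of induction then yields $(M^\mu \oslash M^\nu)\big\uparrow_{\s_m \wr \s_n}^{\s_{nm}} \cong \text{Ind}_{Y_\mu \wr Y_\nu}^{\s_{nm}}\mathbf{1}$, matching the first identification.

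The main obstacle will be this intermediate isomorphism, where one must carefully reconcile the three interlocking actions hidden in the definition of $\oslash$ (the factor-wise action of $\s_m^n$, the $\s_n$-permutation of tensor factors, and the inflated $\s_n$-action on $M^\nu$) with the single wreath-product action on $\mathbf{1}$. If this bookkeeping proves unwieldy, one can bypass it entirely by defining the isomorphism directly on generators: send a plethystic tabloid $\{\bm{T}\}$ to $\sigma \otimes (\{t_1^0\} \otimes \cdots \otimes \{t_n^0\} \otimes \{s_0\})$ for any $\sigma \in \s_{nm}$ with $\sigma \cdot \bm{T}_0 = \bm{T}$, and verify independence from the choice of $\sigma$ (using that the stabilizer is exactly $\bm{R}_{\bm{T}_0} = Y_\mu \wr Y_\nu$) and $\s_{nm}$-equivariance. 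This produces the isomorphism in one step and exhibits an explicit matching of bases on both sides.
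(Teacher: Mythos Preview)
The paper does not actually supply a proof of this proposition; it simply states the result as ``implicit in \cite{PagetWildon}'' and moves on. So there is no argument in the paper to compare against, and your task is really to give the proof that the paper omits.

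Your strategy is correct and is the standard one: both modules are permutation modules for $\s_{nm}$ on a single orbit, and in each case the point stabilizer is (the image in $\s_{nm}$ of) $Y_\mu \wr Y_\nu$, so both are isomorphic to $\mathrm{Ind}_{Y_\mu \wr Y_\nu}^{\s_{nm}}\mathbf{1}$. The identification of $M^{\nu[\mu]}$ is exactly as you describe, using that the stabilizer of $\{\bm{T}_0\}$ is $\bm{R}_{\bm{T}_0}$ by the very definition of plethystic tabloids.

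One comment on the middle step. Your sketch for $M^\mu \oslash M^\nu \cong \mathrm{Ind}_{Y_\mu \wr Y_\nu}^{\s_m \wr \s_n}\mathbf{1}$ via ``tensor powers of induced modules are induced'' and then ``the diagonal tensor reassembles into the trivial representation'' is vaguer than it needs to be, and as written it does not quite track the $\s_n$-action on $(M^\mu)^{\otimes n}$. It is cleaner to argue directly at the level of bases: the elements $(\{t_1\},\ldots,\{t_n\})\otimes\{s\}$ form a basis of $M^\mu \oslash M^\nu$ that is permuted by $\s_m \wr \s_n$; the action is transitive (use $\s_n$ to move $\{s\}$ to the canonical tabloid, then $\s_m^n$ factor-wise to fix up the $\{t_i\}$); and the stabilizer of the canonical basis element is visibly $Y_\mu^n \rtimes Y_\nu = Y_\mu \wr Y_\nu$. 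This is essentially your ``bypass'' paragraph carried out one level down, before inducing to $\s_{nm}$, and it avoids any delicate bookkeeping with inflation. Transitivity of induction then finishes the argument exactly as you say.
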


This means that the Frobenius transform of the character of $M^{\nu[\mu]}$ is $h_{\nu}[h_{\mu}]$. Moreover, the coefficient $a_{\nu[\mu]}^{\lambda}$ of $s_{\lambda}$ in the Schur expansion of $h_{\nu}[h_{\mu}]$ corresponds to the number of submodules of $M^{\nu[\mu]}$ isomorphic to $S^{\lambda}$. By Schur's lemma, we can conclude that $\nu \trianglelefteq \mu$ if and only if there exists an injective homomorphism from $M^{\nu[\mu]}$ to $M^{\mu[\nu]}$.

\section{Results using the tensor product}\label{morphisms}

For two partitions $\nu_1,\nu_2$, we define their union $\nu_1 \sqcup \nu_2$ to be the partition that has the parts of $\nu_1$ followed by the parts of $\nu_2$, and then reordered to obtain a partition. For example, $(2,2) \sqcup (3,1) = (3,2,2,1)$. Also, if $t_1$ is a tableau of shape $\nu_1$, $t_2$ is a tableau of shape $\nu_2$, we define their union $t_1 \sqcup t_2$ to be the tableau obtained by stacking $t_1$ and $t_2$, and then reordering the rows so that it is a tableau of shape $\nu_1 \sqcup \nu_2$. We can also make the same construction with tabloids.
For example, we have
\[
\{t_1\} = 
\ytableausetup{tabloids}
\begin{ytableau}
2 & 3 \\
1 & 8
\end{ytableau}; \qquad \{t_2\} = 
\begin{ytableau}
7 \\
5 & 4 & 6
\end{ytableau}
\qquad \implies \qquad
\{t_1\} \sqcup \{t_2\} = 
\begin{ytableau}
7 \\
2 & 3\\
1 & 8\\
5 & 4 & 6
\end{ytableau}.
\]
It is easy to see that $\{t_1\} \sqcup \{t_2\} = \{t_1 \sqcup t_2\}$, as putting rows on top of each other instead of next to each other doesn't change the row symmetries. Moreover, if we know $\nu_1$ and $\nu_2$, this construction is invertible. This gives the following result:

\begin{prop} \label{Isotens}
Let $\mu$ be a partition of $m$ and $\nu$ be a partition of $n$. Suppose that there is $\nu_1,\nu_2$ such that $\nu = \nu_1 \sqcup \nu_2$, with $\nu_1 \vdash n_1$ and $\nu_2 \vdash n_2$. Then, $M^{\nu[\mu]} \cong \left( M^{\nu_1[\mu]} \otimes M^{\nu_2[\mu]}\right) \big\uparrow_{\s_{n_1m} \times \s_{n_2m}}^{\s_{nm}}$.
\begin{proof}
An element of $\left( M^{\nu_1[\mu]} \otimes M^{\nu_2[\mu]}\right) \big\uparrow_{\s_{mn_1} \times \s_{mn_2}}^{\s_{mn}}$ is a tensor $\{\bm{T}_1\} \otimes \{\bm{T}_2\}$, where $\{\bm{T}_1\}$ is a plethystic tabloid of shape $\nu_1[\mu]$, $\{\bm{T}_2\}$ is a plethystic tabloid of shape $\nu_2[\mu]$, both with entries in $\{1,\ldots,nm\}$, and such that each number from $1$ to $nm$ appears in exactly one of the two plethystic tabloids. Knowing this, it is easy to see that the map $\{\bm{T}_1\} \otimes \{\bm{T}_2\} \mapsto \{\bm{T}_1 \sqcup \bm{T}_2\}$ is a bijection. 
\end{proof}
\end{prop}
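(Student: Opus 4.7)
The plan is to build an explicit $\s_{nm}$-equivariant bijection between natural bases of the two modules. On the left, a basis of $M^{\nu[\mu]}$ consists of plethystic tabloids of shape $\nu[\mu]$ whose entries partition $\{1,\dots,nm\}$. On the right, the standard description of an induced permutation module identifies a basis of $\bigl( M^{\nu_1[\mu]} \otimes M^{\nu_2[\mu]}\bigr)\big\uparrow_{\s_{n_1m} \times \s_{n_2m}}^{\s_{nm}}$ with tensors $\{\bm{T}_1\} \otimes \{\bm{T}_2\}$ where $\{\bm{T}_i\}$ is a plethystic tabloid of shape $\nu_i[\mu]$ and the entries of $\bm{T}_1$ and $\bm{T}_2$ together partition $\{1,\dots,nm\}$ into blocks of sizes $n_1m$ and $n_2m$. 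The map I would use is $\Phi : \{\bm{T}_1\} \otimes \{\bm{T}_2\} \mapsto \{\bm{T}_1 \sqcup \bm{T}_2\}$, extending the union operation just defined on tabloids.

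The main verification is well-definedness on tabloid classes together with bijectivity. Having fixed a convention for how rows of equal length in $\nu_1$ and $\nu_2$ are interleaved in $\nu = \nu_1 \sqcup \nu_2$, the row-stabilizer decomposes as $\bm{R}_{\bm{T}_1 \sqcup \bm{T}_2} = \bm{R}_{\bm{T}_1} \times \bm{R}_{\bm{T}_2}$: the symmetries of the outer tableau only permute inner tableaux \emph{within} a single row of $\nu$, and the inner row symmetries act only within a given inner tableau, so neither can ever mix a row originating from $\nu_1$ with one originating from $\nu_2$. This identification of row-stabilizers shows that $\Phi$ respects tabloid equivalence on both sides; its inverse simply reads off, row by row, which inner tabloids of a shape-$\nu[\mu]$ tabloid belong to the $\nu_1$-piece and which to the $\nu_2$-piece.

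Equivariance is then essentially automatic: both modules are defined so that $\s_{nm}$ acts by relabeling the entries $\{1,\dots,nm\}$, and relabeling clearly commutes with the union operation on plethystic tabloids. The one step that requires care, and what I expect to be the only real source of friction, is the bookkeeping around equal row lengths in $\nu_1$ and $\nu_2$, making sure the chosen interleaving convention is consistent on the two sides; but this is purely notational. As a sanity check, the isomorphism is consistent with the character identity $h_\nu[h_\mu] = h_{\nu_1}[h_\mu] \cdot h_{\nu_2}[h_\mu]$, which follows from multiplicativity of plethysm in the outer argument together with $h_{\nu_1 \sqcup \nu_2} = h_{\nu_1} h_{\nu_2}$, so a character-theoretic proof is available too; I nevertheless prefer the bijective argument above because the resulting isomorphism is explicit and will be reusable for constructing submodules later.
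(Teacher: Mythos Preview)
Your proposal is correct and follows exactly the same approach as the paper: both define the map $\{\bm{T}_1\} \otimes \{\bm{T}_2\} \mapsto \{\bm{T}_1 \sqcup \bm{T}_2\}$ on basis elements and observe it is an $\s_{nm}$-equivariant bijection. Your write-up is in fact more careful than the paper's (you explicitly verify the row-stabilizer factorization $\bm{R}_{\bm{T}_1 \sqcup \bm{T}_2} = \bm{R}_{\bm{T}_1} \times \bm{R}_{\bm{T}_2}$, note the equal-row-length bookkeeping, and record the character-level sanity check), whereas the paper simply asserts the bijection.
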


For example, if $\mu=(3,2)$, $\nu=(2,2)$, and $\nu_1 = \nu_2 = (2)$, then
\[
\{\bm{T}_1\} \otimes \{\bm{T}_2\} =
\ytableausetup{boxsize=4.5em}
\begin{ytableau}
\tytd{4}{19}{11}{8}{14} & \tytd{13}{5}{9}{7}{20}
\end{ytableau} \otimes
\ytableausetup{boxsize=4.5em}
\begin{ytableau}
\tytd{3}{17}{18}{1}{2} & \tytd{6}{12}{10}{15}{16}
\end{ytableau}
\quad \mapsto \quad 
\{\bm{T}_1 \sqcup \bm{T}_2\} = 
\ytableausetup{boxsize=4.5em}
\begin{ytableau}
\tytd{3}{17}{18}{1}{2} & \tytd{6}{12}{10}{15}{16} \\
\tytd{4}{19}{11}{8}{14} & \tytd{13}{5}{9}{7}{20}
\end{ytableau}.
\]

When we decompose $\mu$ in the same way, we have a weaker result:

\begin{prop}
Let $\mu$ be a partition of $m$ and $\nu$ be a partition of $n$. Suppose that there is $\mu_1,\mu_2$ such that $\mu = \mu_1 \sqcup \mu_2$, with $\mu_1 \vdash m_1$ and $\mu_2 \vdash m_2$. Then, we have an injection $\left(M^{\nu[\mu_1]} \otimes M^{\nu[\mu_2]} \right) \big\uparrow_{\s_{nm_1} \times \s_{nm_2}}^{\s_{nm}} \hookrightarrow M^{\nu[\mu]}$.
\begin{proof}
An element of $\left(M^{\nu[\mu_1]} \otimes M^{\nu[\mu_2]} \right) \big\uparrow_{\s_{nm_1} \times \s_{nm_2}}^{\s_{nm}}$ is of the form $\{\bm{T}_1\} \otimes \{\bm{T}_2\}$, where $\bm{T}_1, \bm{T}_2$ are plethystic tabloids of shape $\nu[\mu_1]$ and $\nu[\mu_2]$, respectively, both with entries in $\{1,\ldots,nm\}$ and such that each number from $1$ to $nm$ appears in exactly one of the two plethystic tabloids. Choose representatives $\bm{T}_1 = (\{t_1\}, \ldots, \{t_n\})$ and $\bm{T}_2 = (\{t_1'\}, \ldots, \{t_n'\})$ (so we choose an order for the rows of the outer tabloid), and let $\bm{T} = (\{t_1 \sqcup t_1'\}, \ldots, \{t_k \sqcup t_k'\})$. Every choice of representative gives a tableau of the form $\sigma \cdot \bm{T}$ for $\sigma \in R_{\bm{T}_1} \times R_{\bm{T}_2}$, where this group acts by permuting inner tabloids in the same row. So, the map $\{\bm{T}_1\} \otimes \{\bm{T}_2\} \mapsto \displaystyle \sum_{\sigma \in R_{\bm{T}_1} \times R_{\bm{T}_2}} \sigma \cdot \bm{T}$ is well-defined, and is injective.
\end{proof}
\end{prop}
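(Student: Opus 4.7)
The plan is to construct an explicit $\s_{nm}$-equivariant map from the induced module to $M^{\nu[\mu]}$ and check that it is injective. The natural idea is to ``zip together'' the two plethystic tabloids along their common outer shape $\nu$. Given ordered representatives $\bm{T}_1 = (\{t_1\},\ldots,\{t_n\})$ of shape $\nu[\mu_1]$ and $\bm{T}_2 = (\{t'_1\},\ldots,\{t'_n\})$ of shape $\nu[\mu_2]$, whose entry sets partition $\{1,\ldots,nm\}$, I would form the plethystic tabloid
\[
\bm{T}_1 \sqcup \bm{T}_2 \;:=\; (\{t_1 \sqcup t'_1\},\ldots,\{t_n \sqcup t'_n\})
\]
of shape $\nu[\mu]$ by stacking inner tabloids cell-by-cell in $\nu$ using the row-stacking operation on tabloids introduced just before Proposition~\ref{Isotens}. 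This mirrors the construction used there, but with the roles of outer and inner shapes exchanged.

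Since $\{\bm{T}_1\}$ and $\{\bm{T}_2\}$ are equivalence classes modulo permutations of inner tabloids within rows of $\nu$, the pairing used in the zipping is not canonical. To obtain a well-defined map from tensors of plethystic tabloids, I would symmetrize over this ambiguity by setting
\[
\Phi\bigl(\{\bm{T}_1\} \otimes \{\bm{T}_2\}\bigr) \;=\; \sum_{(\sigma_1,\sigma_2)\in R_{\bm{T}_1}\times R_{\bm{T}_2}} \bigl\{(\sigma_1\cdot \bm{T}_1) \sqcup (\sigma_2\cdot \bm{T}_2)\bigr\},
\]
where $R_{\bm{T}_i}\subset \s_n$ permutes the inner tabloids of $\bm{T}_i$ within each row of $\nu$. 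The sum is manifestly independent of the chosen representatives, and $\s_{nm}$-equivariance is automatic because the symmetric group acts only by relabeling entries in $\{1,\ldots,nm\}$, commuting both with the row-stacking $\sqcup$ and with the indexing permutations $\sigma_i$. By $\s_{nm}$-equivariance, $\Phi$ then extends uniquely to a morphism on the whole induced module.

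The main obstacle is injectivity, which I would establish by recovering the source from the support of each image. The key observation is that row-stacking $t \sqcup t'$ places the entries of $t$ in the rows of $\mu$ coming from $\mu_1$ and the entries of $t'$ in the rows coming from $\mu_2$; since the tabloid equivalence $\sim_R$ only permutes entries within a fixed row and not rows against one another, these two families of rows occupy canonical positions inside each inner tabloid of shape $\mu$. It follows that, reading off the entries appearing in the ``$\mu_1$-rows'' across all inner tabloids of any summand of $\Phi(\{\bm{T}_1\}\otimes\{\bm{T}_2\})$, one recovers precisely the subset $A\subset\{1,\ldots,nm\}$ of entries used by $\bm{T}_1$. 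Consequently, source basis elements lying in distinct cosets of $\s_{nm_1}\times\s_{nm_2}$ (i.e., with distinct $A$) have images supported in disjoint subspaces of $M^{\nu[\mu]}$, and within a fixed coset the symmetrized orbit sum still distinguishes different choices of $\{\bm{T}_1\}\otimes\{\bm{T}_2\}$, so $\Phi$ is globally injective.
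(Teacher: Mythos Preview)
Your construction is exactly the paper's map: zip the inner tabloids cell-by-cell via $\sqcup$ and symmetrize over $R_{\bm{T}_1}\times R_{\bm{T}_2}$ to make it well-defined. The paper states well-definedness and injectivity in one breath, whereas you spell out the injectivity argument (recovering the entry set $A$, and then $\{\bm{T}_1\},\{\bm{T}_2\}$, from any summand); in fact your observation that the $\mu_1$-rows and $\mu_2$-rows occupy fixed positions in each inner tabloid already shows that distinct basis elements $\{\bm{T}_1\}\otimes\{\bm{T}_2\}$ have images with disjoint supports, so the ``within a fixed coset'' clause is not even needed.
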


For example, if $\mu=(3,2)$ and $\nu=(2,2)$ as before, then $\mu = (3) \sqcup (2)$, then
\[
\ytableausetup{boxsize=4.5em}
\begin{ytableau}
\tyt{3}{17}{18} & \tyt{6}{10}{12} \\
\tyt{4}{11}{19} & \tyt{5}{9}{13}
\end{ytableau} \ \otimes \ 
\ytableausetup{boxsize=3.5em}
\begin{ytableau}
\tyd{1}{2} & \tyd{15}{16} \\
\tyd{8}{14} & \tyd{7}{20}
\end{ytableau}
\quad \mapsto \quad \displaystyle \sum_{\sigma \in R_{\bm{T}_1} \times R_{\bm{T}_2}} \sigma \cdot
\ytableausetup{boxsize=4.5em}
\begin{ytableau}
\tytd{3}{17}{18}{1}{2} & \tytd{6}{10}{12}{15}{16} \\
\tytd{4}{11}{19}{8}{14} & \tytd{5}{9}{13}{7}{20}
\end{ytableau} 
\]

By combining these two results, we obtain a proof of proposition \ref{injmorphism}:

\begin{proof}[Proof of proposition \ref{injmorphism}]
For $\nu = \nu_1 \sqcup \nu_2$, the fact that $\nu_1 \trianglelefteq \mu$ and $\nu_2 \trianglelefteq \mu$ mean that we have injections $\mathcal{F}_1: M^{\nu_1[\mu]} \hookrightarrow M^{\mu[\nu_1]}$ and $\mathcal{F}_2: M^{\nu_2[\mu]} \hookrightarrow M^{\mu[\nu_2]}$. Because induction and tensor product are functorial constructions , we can construct the morphism $\mathcal{F} = \left(\mathcal{F}_1 \otimes \mathcal{F}_2\right) \big\uparrow_{\s_{mn_1} \times \s_{mn_2}}^{\s_{mn}}$. Using previous propositions, we have the composition:
\[
M^{\mu[\nu_1 \sqcup \nu_2]} \tilde{\rightarrow} \left( M^{\mu[\nu_1]} \otimes M^{\mu[\nu_2]}\right) \big\uparrow_{\s_{mn_1} \times \s_{mn_2}}^{\s_{mn}} \xrightarrow{\mathcal{F}} \left( M^{\nu_1[\mu]} \otimes M^{\nu_2[\mu]}\right) \big\uparrow_{\s_{mn_1} \times \s_{mn_2}}^{\s_{mn}} \hookrightarrow M^{(\nu_1 \sqcup \nu_2)[\mu]}.
\]
As both functors are exacts (representations are vector spaces), the injectivity of $\mathcal{F}_1$ and $\mathcal{F}_2$ implies the injectivity of $\mathcal{F}$, hence the injectivity of the composition.
\end{proof}

This proposition has useful corollaries, which are part $(a)$ and $(b)$ of theorem \ref{bigthm}:

\begin{cor}
Let $\mu^n = \underbrace{\mu \sqcup \ldots \sqcup \mu}_{n \textrm{ times}}$. Then, there is an injective morphism $M^{\mu^n[\mu]} \hookrightarrow M^{\mu[\mu^n]}$. 
\begin{proof}
When $n=1$, take the identity map. Suppose that it is true for $n$. Then, $\mu^{n+1} =  \mu^n \sqcup \mu$, so by induction and proposition \ref{injmorphism}, there exists an injective homomorphism $M^{\mu^{n+1}[\mu]}  \hookrightarrow M^{\mu[\mu^{n+1}]}$.
\end{proof}
\end{cor}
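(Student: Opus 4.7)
The plan is to prove this by induction on $n$, using Proposition \ref{injmorphism} as the inductive engine. For the base case $n = 1$, one has $\mu^1 = \mu$, and the identity map on $M^{\mu[\mu]}$ is trivially an injective morphism; equivalently, $\mu \trianglelefteq \mu$, which is just reflexivity.

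For the inductive step, I would assume that $\mu^n \trianglelefteq \mu$ holds, i.e.\ that there is an injective morphism $M^{\mu^n[\mu]} \hookrightarrow M^{\mu[\mu^n]}$. The key observation is the decomposition $\mu^{n+1} = \mu^n \sqcup \mu$, which follows directly from the definition of union of partitions (the multiset of parts of $\mu^{n+1}$ is the disjoint union of the multisets of parts of $\mu^n$ and of $\mu$). Since we have both $\mu^n \trianglelefteq \mu$ (by the inductive hypothesis) and $\mu \trianglelefteq \mu$ (reflexivity), Proposition \ref{injmorphism} applied to $\nu_1 = \mu^n$ and $\nu_2 = \mu$ immediately yields $\mu^n \sqcup \mu \trianglelefteq \mu$, which is exactly $\mu^{n+1} \trianglelefteq \mu$. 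Translating this relation back into the language of modules gives the desired injection $M^{\mu^{n+1}[\mu]} \hookrightarrow M^{\mu[\mu^{n+1}]}$.

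There is essentially no obstacle here: once Proposition \ref{injmorphism} is available, the corollary reduces to a routine induction plus the elementary identity $\mu^{n+1} = \mu^n \sqcup \mu$. All the real work has already been done in constructing the composite morphism of Proposition \ref{injmorphism} via the isomorphism of Proposition \ref{Isotens} and the tensor-product injection from the subsequent proposition.
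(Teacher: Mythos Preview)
Your proof is correct and follows exactly the same approach as the paper: induction on $n$ with base case the identity map, and inductive step via the decomposition $\mu^{n+1} = \mu^n \sqcup \mu$ together with Proposition~\ref{injmorphism}. You have merely made explicit the reflexivity $\mu \trianglelefteq \mu$ needed to apply the proposition, which the paper leaves implicit.
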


\begin{cor}
Suppose that Foulkes' conjecture is true up to $n-1$, \textrm{i.e.} there exists an injective homomorphism $M^{(n-1)[(m)]} \hookrightarrow M^{(m)[(n-1)]}$ for all $m \geq n - 1$. Choose $\mu$ such that its largest part is smaller or equal to $n$. Then, there exists an injective homomorphism $M^{\mu[(n)]} \hookrightarrow M^{(n)[\mu]}$.
\begin{proof}
If $\mu = (\mu_1, \ldots, \mu_k)$, then $\mu = (\mu_1) \sqcup \ldots \sqcup (\mu_k)$. If $\mu_i = n$, then the injective morphism $M^{(n)[(n)]} \hookrightarrow M^{(n)[(n)]}$ is the indentity. If $\mu_i < n$, then by the assumption on Foulkes' conjecture, there exists an injective homomorphism $M^{(\mu_i)[(n)]} \hookrightarrow M^{(n)[(\mu_i)]}$. Using proposition \ref{injmorphism} iteratively, we can then construct an injective homomorphism $M^{\mu[(n)]} \hookrightarrow M^{(n)[\mu]}$. 
\end{proof}
\end{cor}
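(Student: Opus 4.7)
The plan is to reduce the general partition $\mu$ to its constituent one-part components, verify the relation $\trianglelefteq (n)$ for each component, and then splice them together using Proposition \ref{injmorphism}. Concretely, I would write $\mu = (\mu_1) \sqcup (\mu_2) \sqcup \cdots \sqcup (\mu_k)$, where each $\mu_i$ is a single positive integer with $\mu_i \leq n$ by hypothesis. This reduction is natural because Proposition \ref{injmorphism} is already formulated to combine relations of the form $\nu_i \trianglelefteq \mu$ via the $\sqcup$ operation on the left argument, so the only thing we need to establish individually is $(\mu_i) \trianglelefteq (n)$ for each $i$.

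For each one-part piece, I would split into two cases. If $\mu_i = n$, then $(\mu_i)[(n)]$ and $(n)[(\mu_i)]$ are literally the same module, so the identity supplies the required injection trivially. If $\mu_i < n$, then $\mu_i$ is some integer in $\{1, 2, \ldots, n-1\}$, and in particular $\mu_i \leq n-1 \leq n$, so Foulkes' conjecture in the instance $(\mu_i) \trianglelefteq (n)$ falls within the range covered by the hypothesis that Foulkes holds up to $n-1$. This directly provides an injective homomorphism $M^{(\mu_i)[(n)]} \hookrightarrow M^{(n)[(\mu_i)]}$.

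Finally, I would assemble the pieces by iterating Proposition \ref{injmorphism}. Starting from $(\mu_1) \trianglelefteq (n)$ and $(\mu_2) \trianglelefteq (n)$, the proposition yields $(\mu_1) \sqcup (\mu_2) \trianglelefteq (n)$; feeding this together with $(\mu_3) \trianglelefteq (n)$ produces $(\mu_1) \sqcup (\mu_2) \sqcup (\mu_3) \trianglelefteq (n)$, and so on. After $k-1$ applications we obtain $\mu \trianglelefteq (n)$, which translates into the desired injection $M^{\mu[(n)]} \hookrightarrow M^{(n)[\mu]}$.

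The proof is essentially a bookkeeping argument once Proposition \ref{injmorphism} is in hand, so there is no real obstacle in the construction itself. The only point requiring a bit of care is the interpretation of the hypothesis \emph{Foulkes' conjecture is true up to $n-1$}: one must read this as saying that $(k) \trianglelefteq (m)$ holds for every pair with $k \leq n-1$ and $m \geq k$, rather than only the single instance $k = n-1$, so that every single-part $\mu_i < n$ is covered; with this reading, no additional assumption is required and the inductive assembly goes through cleanly.
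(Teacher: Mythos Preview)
Your proposal is correct and follows essentially the same argument as the paper: decompose $\mu$ into one-part partitions, handle each $(\mu_i)\trianglelefteq(n)$ via either the identity (when $\mu_i=n$) or the Foulkes hypothesis (when $\mu_i<n$), and then iterate Proposition~\ref{injmorphism}. Your closing remark about reading ``Foulkes up to $n-1$'' as covering all $k\le n-1$ (not just $k=n-1$) is a valid and careful observation---the paper's ``i.e.'' clause is indeed slightly imprecise on this point, and the proof there silently uses the stronger reading as well.
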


This last corollary gives some structure on $\trianglelefteq$, given that Foulkes' conjecture is true. In effect, it implies that the set $\{\mu \ | \ \text{The first part of } \mu \text{ is } \leq n\}$ has $(n)$ as a maximal element. So, we can study the relation $\trianglelefteq$ on the subsets $\mathcal{P}_n = \{\mu \ | \ \text{The first part of } \mu \text{ is exactly } n\}$, and then study the relations between the different $\mathcal{P}_n$. A first result for $\mathcal{P}_2$, from the author and Maas-Gariépy, can be found in \cite{FlorenceEtMoi}, and the author hopes to generalize this result in the future.

\section{Results using homomorphisms}

\subsection{Projection from permutation modules}\label{permmod}

For a partition $\mu$, let $\mu^n$ be $\underbrace{\mu \sqcup ... \sqcup \mu}_{n \ \text{times}}$, and consider the permutation module $M^{\mu^n}$. By iteravely applying proposition \ref{Isotens} (for $\nu=(1)$), this module is isomorphic to the module $(M^{\mu})^{\otimes n}\big\uparrow_{(\s_{n})^m}^{\s_{nm}}$. So, we can write an element of $M^{\mu^n}$ as $\{t_1\} \otimes ... \otimes \{t_n\}$, where each $\{t_i\}$ is an tabloid of shape $\mu$ with entries in $\{1,\ldots,nm\}$ and each number from $1$ to $nm$ is used exactly once. We define the projection
\[
\begin{array}{ccll}
\phi: & M^{\mu^n} & \xtwoheadrightarrow & M^{\nu[\mu]} \\
         & \{t_1\} \otimes ... \otimes \{t_n\} & \longmapsto           & (\{t_1\},\ldots,\{t_n\})_{\nu}
\end{array}.
\]
Reciprocally, there is an injective homomorphism
\[
\begin{array}{crlc}
\widetilde{\phi}: & M^{\nu[\mu]} & \hooklongrightarrow & M^{\mu^n} \\
                    & \{\bm{T}\} = (\{t_1\},\ldots,\{t_n\})_{\nu}	& \longmapsto	         & \displaystyle \frac{1}{\left|R_{\bm{T}}\right|} \sum_{\sigma \in R_{\bm{T}}} \sigma \cdot \{t_1\} \otimes \ldots \otimes \{t_n\}
\end{array},
\]
We can easily see that $\phi \circ \widetilde{\phi} = \text{Id}$. For example, if $\nu = (2,1)$ and $\mu = (2,2)$, then
\begin{align*}
\phi \left( \widetilde{\phi} \left( \ytableausetup{tabloids, boxsize=4em}\begin{ytableau} \tbddd{4}{12}{3}{5} \\ \tbddd{1}{9}{7}{11} & \tbddd{6}{10}{2}{8} \end{ytableau} \right) \right) &= \phi \left( \frac{1}{2} \left( \tbddd{1}{9}{7}{11} \otimes \tbddd{6}{10}{2}{8} \otimes \tbddd{4}{12}{3}{5} +  \tbddd{6}{10}{2}{8} \otimes\tbddd{1}{9}{7}{11} \otimes \tbddd{4}{12}{3}{5} \right) \right) \\ &= \ytableausetup{tabloids, boxsize=4em} \begin{ytableau} \tbddd{4}{12}{3}{5} \\ \tbddd{1}{9}{7}{11} & \tbddd{6}{10}{2}{8} \end{ytableau}.
\end{align*}

These two morphisms are generalization of similar constructions in \cite{deBoeck} in the case $\mu=(m)$. They are used in both section \ref{GFHM} and section \ref{ssh}.

\subsection{Generalized Foulkes-Howe map}\label{GFHM}

We would like to construct a candidate for an homomorphism $M_{\mu}^{\nu} \hookrightarrow M_{\nu}^{\mu}$. We describe in the previous section a morphism from $M_{\mu}^{\nu}$ to $M^{\mu^n}$, and another from $M^{\nu^m}$ to $M_{\nu}^{\mu}$. The only missing part is a morphism from $M^{\mu^n}$ to $M^{\nu^m}$.

Let $\{t_1\} \otimes ... \otimes \{t_n\}$ be an element of $M^{\mu^n}$. Denote $x_{i,j}$ the $j^{th}$ entry of $t_i$ (for the order $<_{\text{rlex}}$ and any choice of representative). The set of all $x_{i,j}$ is equal to the set $\{1,\ldots,nm\}$. Then, for $1 \leq j \leq m$, consider the tableau $s_j$ of shape $\nu$ such that its $i^{th}$ entry (for $<_{\text{rlex}}$) is $x_{i,j}$. We define the following morphism:
\[
\begin{array}{ccll}
\Psi: & M^{\mu^n} & \longrightarrow & M^{\nu^m} \\
         & \{t_1\} \otimes ... \otimes \{t_n\} & \longmapsto   & \displaystyle \sum_{\sigma \in R_{t_1} \times ... \times R_{t_n}} \sigma \cdot \left( \{s_1\} \otimes ... \otimes \{s_m\} \right)
\end{array}.
\] 
For example, if $\mu = (2,2)$, $\nu = (2,1)$, and 
%take the monomial $x_{\textcolor{magenta}{1}}x_{\textcolor{magenta}{2}} x_{\textcolor{orange}{5}}^{\textcolor{orange}{2}}x_{\textcolor{orange}{6}}^{\textcolor{orange}{2}}x_{\textcolor{green}{9}}^{\textcolor{green}{3}}x_{\textcolor{green}{10}}^{\textcolor{green}{3}}x_{\textcolor{red}{3}}^{\textcolor{red}{4}}x_{\textcolor{red}{4}}^{\textcolor{red}{4}}x_{\textcolor{teal}{7}}^{\textcolor{teal}{5}}x_{\textcolor{teal}{8}}^{\textcolor{teal}{5}} x_{\textcolor{blue}{11}}^{\textcolor{blue}{6}}x_{\textcolor{blue}{12}}^{\textcolor{blue}{6}}$ of $M^{\mu^n}$, which corresponds to the tensor product $x_{\textcolor{magenta}{1}}x_{\textcolor{magenta}{2}}x_{\textcolor{red}{3}}^{\textcolor{red}{2}}x_{\textcolor{red}{4}}^{\textcolor{red}{2}} \otimes x_{\textcolor{orange}{5}}x_{\textcolor{orange}{6}}x_{\textcolor{teal}{7}}^{\textcolor{teal}{2}}x_{\textcolor{teal}{8}}^{\textcolor{teal}{2}} \otimes x_{\textcolor{green}{9}}x_{\textcolor{green}{10}}x_{\textcolor{blue}{11}}^{\textcolor{blue}{2}}x_{\textcolor{blue}{12}}^{\textcolor{blue}{2}}$. It can be written as $\xi^{t_1} \otimes \xi^{t_2} \otimes \xi^{t_3}$, with
\[
\{t_1\} \otimes \{t_2\} \otimes \{t_3\} =
\ytableausetup{tabloids}
\begin{ytableau}
\textcolor{red}{3} & \textcolor{red}{4} \\
\textcolor{violet}{1} & \textcolor{violet}{2}
\end{ytableau} \ \otimes \
\begin{ytableau}
\textcolor{teal}{7} & \textcolor{teal}{8} \\
\textcolor{orange}{5} & \textcolor{orange}{6}
\end{ytableau}
\  \otimes \
\begin{ytableau}
\textcolor{blue}{11} & \textcolor{blue}{12} \\
\textcolor{green}{9} & \textcolor{green}{10}
\end{ytableau},
\]
then its image by $\Psi$ is
\[
\displaystyle \sum_{\sigma \in R_{t_1} \times R_{t_2} \times R_{t_3}} \sigma \cdot \left( \{s_1\} \otimes \{s_2\} \otimes \{s_3\} \otimes \{s_4\} \right) = 
\displaystyle \sum_{\sigma \in R_{t_1} \times R_{t_2} \times R_{t_3}} \sigma \cdot \left(
\begin{ytableau}
\textcolor{green}{9}  \\
\textcolor{violet}{1} & \textcolor{orange}{5}
\end{ytableau} 
\ \otimes \
\begin{ytableau}
\textcolor{green}{10}  \\
\textcolor{violet}{2} & \textcolor{orange}{6}
\end{ytableau}
\ \otimes \
\begin{ytableau}
\textcolor{blue}{11}  \\
\textcolor{red}{3} & \textcolor{teal}{7}
\end{ytableau}
\ \otimes \
\begin{ytableau}
\textcolor{blue}{12}  \\
\textcolor{red}{4} & \textcolor{teal}{8}
\end{ytableau} \right)
\]
%so that $\xi^{s_1} \otimes \xi^{s_2} \otimes \xi^{s_3} \otimes \xi^{s_4} =  x_{\textcolor{magenta}{1}}x_{\textcolor{orange}{5}}x_{\textcolor{green}{9}}^{\textcolor{green}{2}} \otimes x_{\textcolor{magenta}{2}}x_{\textcolor{orange}{6}}x_{\textcolor{green}{10}}^{\textcolor{green}{2}} \otimes x_{\textcolor{red}{3}}x_{\textcolor{teal}{7}}x_{\textcolor{blue}{11}}^{\textcolor{blue}{2}} \otimes x_{\textcolor{red}{4}}x_{\textcolor{teal}{8}}x_{\textcolor{blue}{12}}^{\textcolor{blue}{2}}$. We can conclude that the image of the monomial by $\Psi$ is
%\[
%\displaystyle \frac{1}{4} \sum_{\sigma \in R_{t_1} \times R_{t_2} \times R_{t_3}} \sigma \cdot x_{\textcolor{magenta}{1}}x_{\textcolor{orange}{5}}x_{\textcolor{magenta}{2}}^{\textcolor{magenta}{2}}x_{\textcolor{orange}{6}}^{\textcolor{orange}{2}} x_{\textcolor{red}{3}}^{\textcolor{red}{3}}x_{\textcolor{teal}{7}}^{\textcolor{teal}{3}}x_{\textcolor{red}{4}}^{\textcolor{red}{4}}x_{\textcolor{teal}{8}}^{\textcolor{teal}{4}} x_{\textcolor{green}{9}}^{\textcolor{green}{5}}x_{\textcolor{green}{10}}^{\textcolor{green}{6}}x_{\textcolor{blue}{11}}^{\textcolor{blue}{7}} x_{\textcolor{blue}{12}}^{\textcolor{blue}{8}}.
%\]
Now consider the composition of homomorphisms $\phi \circ \Psi \circ \widetilde{\phi}: M^{\nu[\mu]} \rightarrow M^{\mu[\nu]}$. When $\mu=(m)$, $\nu=(n)$, this map is the \textit{Foulkes-Howe map} stated in the language of symmetric groups. This map was developed as a tool to prove the Foulkes’ conjecture. For this reason, we call this homomorphism the \textit{generalized Foulkes-Howe map}, and we denote it $\mathcal{F}_{\nu,\mu}$. We can have a better understanding of this composition by using the following diagram:

\begin{center}
\begin{tikzcd}
M^{\nu[\mu]}
	\arrow[r, dashed, "\mathcal{F}_{\nu,\mu}"] 
	\arrow[d, "\widetilde{\phi}"]
& M^{\mu[\nu]} \\
M^{\mu^n}
	\arrow[r, "\Psi"]
& M^{\nu^m}
	\arrow[u, "\phi"] \\
\end{tikzcd}
\end{center}

We have the following result for the (regular) Foulkes-Howe map, proven by McKay \cite{McKay}:

\begin{thm}[Propagation theorem]
If $\mathcal{F}_{(n),(m)}$ is injective, then the map $\mathcal{F}_{(n),(m+1)}$ is also injective.
\end{thm}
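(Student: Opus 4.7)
My plan is to reduce injectivity of $\mathcal{F}_{(n),(m+1)}$ to that of $\mathcal{F}_{(n),(m)}$ by a ``marker'' construction. Set $[N]=\{1,\ldots,n(m+1)\}$ and fix $n$ distinguished symbols $\star_1,\ldots,\star_n\in[N]$. Consider the submodule $M_\star\subseteq M^{(n)[(m+1)]}$ spanned by the plethystic tabloids in which each of the $n$ blocks contains exactly one marker. Because the outer shape $(n)$ makes blocks unordered while the markers are pairwise distinguishable, a marked tabloid is the same data as an ordered $n$-tuple $(B_1,\ldots,B_n)$ of disjoint $(m+1)$-subsets of $[N]$ covering $[N]$ with $\star_i\in B_i$; writing $B_i=\{\star_i\}\sqcup t_i$ identifies $M_\star$ with the permutation module $M^{(m)^n}$ built over the alphabet $[N]\setminus\{\star_1,\ldots,\star_n\}$, and the forgetful surjection $M_\star\twoheadrightarrow M^{(n)[(m)]}$ coincides with the projection $\phi$ of Section~\ref{permmod}.

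Next I would analyse the restriction of $\mathcal{F}_{(n),(m+1)}=\phi\circ \Psi\circ \widetilde{\phi}$ to $M_\star$. Expanding $\Psi$ via the row-group averaging by $\s_{m+1}^n$, the image of an ordered representative $(B_1,\ldots,B_n)$ of a marked tabloid decomposes according to which column of the target $M^{(n)^{m+1}}$ each marker $\star_i$ lands in, giving a sum indexed by functions $\{1,\ldots,n\}\to\{1,\ldots,m+1\}$. The central claim to establish is that the summand in which all $n$ markers share a single column yields, after applying $\phi$ and projecting onto the $\s_n$-invariant ``marker column,'' exactly $\mathcal{F}_{(n),(m)}$ applied to $\phi(B_1,\ldots,B_n)\in M^{(n)[(m)]}$ composed with an explicit equivariant inclusion $M^{(m)[(n)]}\hookrightarrow M^{(m+1)[(n)]}$ that reinserts the marker column, up to a controlled nonzero scalar arising from the row-group counts in $\widetilde{\phi}$ and $\Psi$; call this identity $(\ast)$.

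Granted $(\ast)$, the argument finishes as follows. For $x\in\ker\mathcal{F}_{(n),(m+1)}$, the $\s_{n(m+1)}$-equivariance of $\mathcal{F}$ and the fact that every plethystic tabloid admits many systems of markers allow me to assume, after averaging over $\s_{n(m+1)}$-translates, that $x\in M_\star$. Applying the projection in $(\ast)$ sends $x$ to an element of $\ker\mathcal{F}_{(n),(m)}$, which vanishes by hypothesis; one then peels off the remaining $\s_n$-isotypic pieces of $x$, each of which corresponds to a configuration in which strictly fewer markers are aligned, and iterates the same analysis on smaller marker subsets until $x=0$ is reached. The main obstacle is establishing $(\ast)$: the row-group averaging in $\Psi$ at level $m+1$ genuinely smears the markers across all $m+1$ columns, so isolating the aligned summand demands careful $\s_n$-equivariant bookkeeping of the row groups at both levels, and the combinatorial scalar relating $\Psi$ at levels $m$ and $m+1$ on the aligned piece must be shown to be nonzero; once that identity is in hand, the iteration over isotypic pieces is routine.
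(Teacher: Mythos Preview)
The paper does not give its own proof of this theorem; it simply states it and attributes the proof to McKay. So there is no in-paper argument to compare your proposal against.

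Your proposal is a strategy outline with real gaps. The most concrete one: the subspace $M_\star$ you introduce is \emph{not} a $\s_{n(m+1)}$-submodule of $M^{(n)[(m+1)]}$, since a general permutation can send two distinct markers into the same block. Your identification $M_\star\cong M^{(m)^n}$ is therefore only valid as modules over the pointwise stabiliser of the markers inside $\s_{n(m+1)}$, not over the full group. This breaks the step where you claim that, after averaging over $\s_{n(m+1)}$-translates, a kernel element may be assumed to lie in $M_\star$: there is no $\s_{n(m+1)}$-equivariant projection onto $M_\star$, and nothing forces a nonzero element of $\ker\mathcal{F}_{(n),(m+1)}$ to have any controllable component there.

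Even setting that aside, you explicitly leave the key identity $(\ast)$ unproven and flag it as the main obstacle. Isolating the ``all markers aligned'' summand from the $\Psi$-expansion is precisely where the substance is, and the subsequent ``peeling off isotypic pieces'' presupposes a filtration of the target by marker configuration that again fails to be $\s_{n(m+1)}$-stable. The marker idea is a natural heuristic for linking the Foulkes--Howe maps at levels $m$ and $m+1$, but the equivariance bookkeeping you defer is where essentially all of the content sits; as written this is a plan, not a proof.
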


Moreover, we know that for $n \leq 5$, the map $\mathcal{F}_{(n),(n+1)}$ is injective \cite{CIM}. Using the theorem inductively, $\mathcal{F}_{(n), (m)}$ is injective for $m\geq n+1$. The case $m=n$ is trivial (although the map $\mathcal{F}_{(n),(n)}$ is not always injective, see \cite{CIM}), so the Foulkes’ conjecture holds for $n \leq 5$. In the future, we hope to find propagation theorems for other (conjectured) chains in the relation $\trianglelefteq$, because in can help to show that this relation is transitive. For example, the author believes that for any partition $\mu$, the subset $\{\mu + (k) \ | \ k \in \N\}$ is a transitive chain of the relation, with $\mu + (k) \trianglelefteq \mu + (k+1)$. When $\mu$ is the empty partition, this corresponds to the Foulkes' conjecture, so we think we can generalize the propagation theorem to this case, and probably more.

\subsection{First results}\label{results1}

We can use the generalized Foulkes-Howe map and theorem \ref{injmorphism} to prove the two last parts of \ref{bigthm}. Part $(c)$ follows from the following proposition:

\begin{prop}
For any positive integer $n$ and partition $\mu$, there is an injection $M^{(1^n)[\mu]} \hookrightarrow M^{\mu[(1^n)]}$. 
\begin{proof}
We use the generalized Foulkes-Howe map, so we have the morphisms
\begin{center}
\begin{tikzcd}
\mathcal{F}_{(1^n),\mu}: M_{(1^n)[\mu]} 
	\arrow[r, hookrightarrow, "\widetilde{\phi}"] 
& M^{\mu^n} 
	\arrow[r, "\Psi"] 
& M^{(1^n)^m} 
	\arrow[r, twoheadrightarrow, "\phi"]
& M^{\mu[(1^n)]} 
\end{tikzcd}
\end{center}
Let $\{\bm{T}\} = (\{t_1\},\ldots,\{t_n\})_{(1^n)}$ be a plethystic tabloid of shape $(1^n)$. Then, every inner tabloid is in its own row, so $R_{\bm{T}}$ is the trivial group. This means that $\widetilde{\phi}$, which maps $\{\bm{T}\}$ to $\{t_1\} \otimes \ldots \otimes \{t_n\}$, is an isomorphism. 

Fix representatives $t_1, \ldots, t_n$, and define tableaux $s_j$ of shape $(1^n)$ such that $s_j(1,i)$ is the $j^{th}$ value of $t_i$ (for $<_{\text{rlex}}$).Then, 
\[
\Psi(\{t_1\} \otimes \ldots \otimes \{t_n\}) = \displaystyle \sum_{\sigma \in R_{t_1} \times \ldots \times R_{t_n}} \sigma \cdot \left( \{s_1\} \otimes \ldots \otimes \{s_m\} \right).
\]
Note that for any $j$, the tabloid $\{s_j\}$ is of shape $(1^n)$, so $s_j$ is the only element of the equivalence class. Finally applying $\phi$ to this result, it follows that 
\[
\mathcal{F}_{(1^n),\mu}(\bm{T}) = \dfrac{1}{|R_{\bm{T}}|} \displaystyle \sum_{\sigma \in R_{t_1} \times \ldots \times R_{t_n}} \sigma \cdot \left( \{s_1\}, \ldots, \{s_m\} \right)_{\mu}.
\]
Let $\bm{S} = \left( \{s_1\}, \ldots, \{s_m\} \right)_{\mu}$. The entries that are in row $r$ of $\{t_i\}$ are mapped to the row $i$ of a certain inner tabloid of $\bm{S}$ situated in the $r^{th}$ row of the outer tabloid. Which one is not important, as the sum is over all possible ways to do that. As there is only one entry in each row of a tabloid of shape $(1^n)$, it means that we can recover $\{\bm{T}\}$ from $\{\bm{S}\}$, and the sum is there so that the map is well-defined. Thus, the map is injective. 
\end{proof}
\end{prop}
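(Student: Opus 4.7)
The plan is to use the generalized Foulkes-Howe map $\mathcal{F}_{(1^n),\mu} = \phi \circ \Psi \circ \widetilde{\phi} \colon M^{(1^n)[\mu]} \to M^{\mu[(1^n)]}$ constructed in Section~\ref{GFHM}, and verify that it is injective. The crucial simplification comes from the shape $(1^n)$: its outer shape is a single column, so each inner tabloid of a plethystic tabloid $\{\bm{T}\} = (\{t_1\},\ldots,\{t_n\})_{(1^n)}$ sits alone in its row, and the outer row-stabilizer $R_{\bm{T}}$ is trivial.

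As an immediate consequence, $\widetilde{\phi}$ collapses to the bijection $\{\bm{T}\} \mapsto \{t_1\} \otimes \cdots \otimes \{t_n\}$, and is in fact an isomorphism of $\s_{nm}$-modules, so injectivity of $\mathcal{F}_{(1^n),\mu}$ reduces to injectivity of $\phi \circ \Psi$ on the image of $\widetilde{\phi}$. Unwinding the definitions of $\Psi$ and $\phi$, after fixing representatives $t_i$ and letting $s_j$ denote the column tableau of shape $(1^n)$ whose $i$-th entry is the $j$-th entry of $t_i$ (for $<_{\text{rlex}}$), one obtains
\[
\mathcal{F}_{(1^n),\mu}(\{\bm{T}\}) = \sum_{\sigma \in R_{t_1} \times \cdots \times R_{t_n}} \sigma \cdot \bigl(\{s_1\},\ldots,\{s_m\}\bigr)_{\mu},
\]
where each $\{s_j\}$ coincides with $s_j$ because shape $(1^n)$ carries no row-symmetry.

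The main step is to recover $\{\bm{T}\}$ from this image. In any summand $\bm{S} = (\{s_1\},\ldots,\{s_m\})_\mu$, the $i$-th entries of the inner columns occupying the $r$-th row of the outer shape $\mu$ are precisely the entries of $t_i$ lying in its $r$-th row; varying $r$ reconstructs $\{t_i\}$, and then varying $i$ reconstructs $\{\bm{T}\}$. The expected obstacle is not conceptual but bookkeeping: one must verify that the symmetrization over $R_{t_1} \times \cdots \times R_{t_n}$ really produces a sum of distinct basis plethystic tabloids, so that the supports of $\mathcal{F}_{(1^n),\mu}(\{\bm{T}\})$ and $\mathcal{F}_{(1^n),\mu}(\{\bm{T}'\})$ are disjoint whenever $\{\bm{T}\} \neq \{\bm{T}'\}$. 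Since each such summand determines its source via the reconstruction above, the images of distinct basis elements are supported on disjoint sets of plethystic tabloids and are therefore linearly independent, yielding the injection.
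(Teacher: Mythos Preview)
Your proof is correct and follows essentially the same route as the paper: you use the generalized Foulkes--Howe map $\mathcal{F}_{(1^n),\mu}$, exploit the triviality of $R_{\bm{T}}$ for the column outer shape to make $\widetilde{\phi}$ an isomorphism, and then argue injectivity by showing each summand $\sigma\cdot\bm{S}$ determines its source $\{\bm{T}\}$ (so images of distinct basis tabloids have disjoint supports). The only cosmetic difference is that you phrase the final step in terms of disjoint supports, whereas the paper simply says ``we can recover $\{\bm{T}\}$ from $\{\bm{S}\}$''; these are the same observation.
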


For example, if $\mu=(2,2)$ and $n=2$, we have that
\[
\mathcal{F}_{(1^n),\mu} \left( \ \ytableausetup{boxsize=4em, tabloids} \begin{ytableau} \tbdtd{\textcolor{orange}{5}}{\textcolor{orange}{6}}{\textcolor{orange}{8}}{\textcolor{teal}{1}}{\textcolor{teal}{10}} \\ \tbdtd{\textcolor{magenta}{2}}{\textcolor{magenta}{3}}{\textcolor{magenta}{9}}{\textcolor{red}{4}}{\textcolor{red}{7}} \end{ytableau} \ \right) = \displaystyle \frac{1}{3!2!} \sum_{\sigma} \sigma \cdot \left( \ \ytableausetup{boxsize=4em, tabloids} \begin{ytableau} \tbduu{\textcolor{red}{4}}{\textcolor{teal}{1}} & \tbduu{\textcolor{red}{7}}{\textcolor{teal}{10}} \\ \tbduu{\textcolor{magenta}{2}}{\textcolor{orange}{5}} & \tbduu{\textcolor{magenta}{3}}{\textcolor{orange}{6}} & \tbduu{\textcolor{magenta}{9}}{\textcolor{orange}{8}} \end{ytableau} \ \right),
\]
where the sum is over all $\sigma \in \s_{10}$ permuting numbers of the same color. Clearly, only this plethystic tabloid can have this image.

As a corollary, we obtain part $(d)$ of theorem \ref{bigthm}:

\begin{cor}
For any positive integer $n$ and partition $\mu$, there is an injection $M^{(\mu \sqcup 1^n)[\mu]} \hookrightarrow M^{\mu[\mu \sqcup 1^n]}$.
\begin{proof}
Take the indentity morphism $M^{\mu[\mu]} \rightarrow M^{\mu[\mu]}$ and the generalized Foulkes-Howe map $\mathcal{F}_{1^n,\mu}: M^{(1^n)[\mu]} \hookrightarrow M^{\mu[(1^n)]}$. Both are injectives (first is obvious, second follows from previous proposition), so by propostion \ref{injmorphism}, we can construct an injective morphism $ M^{(\mu \sqcup (1^n))[\mu]} \hookrightarrow M^{\mu[\mu \sqcup (1^n)]}$. 
\end{proof}
\end{cor}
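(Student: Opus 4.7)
The plan is to reduce the corollary to the two ingredients just established: $(1^n) \trianglelefteq \mu$ (the previous proposition) and the trivial fact that $\mu \trianglelefteq \mu$, and then feed them into Proposition \ref{injmorphism} with the decomposition $\mu \sqcup (1^n) = \mu \sqcup (1^n)$ taken as $\nu_1 \sqcup \nu_2$.

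Concretely, I would first write down the two injective $\s_{nm}$-morphisms we already have in hand: the identity $\mathrm{Id}: M^{\mu[\mu]} \to M^{\mu[\mu]}$, witnessing $\mu \trianglelefteq \mu$, and the generalized Foulkes--Howe map $\mathcal{F}_{(1^n),\mu}: M^{(1^n)[\mu]} \hookrightarrow M^{\mu[(1^n)]}$, shown to be injective in the preceding proposition and witnessing $(1^n) \trianglelefteq \mu$. Setting $\nu_1 = \mu$ and $\nu_2 = (1^n)$ in Proposition \ref{injmorphism}, the hypotheses $\nu_1 \trianglelefteq \mu$ and $\nu_2 \trianglelefteq \mu$ are therefore both satisfied, and the conclusion of that proposition delivers exactly the injection $M^{(\mu \sqcup (1^n))[\mu]} \hookrightarrow M^{\mu[\mu \sqcup (1^n)]}$ we want.

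There is essentially no obstacle here: the corollary is a direct application of the union-on-the-inside construction used to prove Proposition \ref{injmorphism} (combining the isomorphism from Proposition \ref{Isotens} on the left, the tensor $\mathrm{Id} \otimes \mathcal{F}_{(1^n),\mu}$ induced up to $\s_{nm+m^2}$ in the middle, and the injection from the weaker companion proposition on the right). The only mild subtlety worth flagging in the write-up is that we need to apply the two constructions on opposite sides of the plethysm bracket, so that the outer shapes $\mu$ and $(1^n)$ recombine via $\sqcup$ while the inner shape $\mu$ is preserved; once the roles of the outer and inner partitions are clearly named, the verification is immediate.
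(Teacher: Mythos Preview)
Your proposal is correct and follows essentially the same approach as the paper: take $\nu_1=\mu$ and $\nu_2=(1^n)$, witness $\mu\trianglelefteq\mu$ by the identity and $(1^n)\trianglelefteq\mu$ by the generalized Foulkes--Howe map from the preceding proposition, and apply Proposition~\ref{injmorphism}. The only cosmetic difference is that you spell out the internal mechanism of Proposition~\ref{injmorphism}, which the paper simply invokes.
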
 

%For example, we have for $\mu=(2)$ and $k=2$ that using this morphism,
%\[
%(x_{\textcolor{magenta}{2}}x_{\textcolor{magenta}{3}})*(x_{\textcolor{red}{6}}x_{\textcolor{red}{7}})*(x_{\textcolor{orange}{1}}x_{\textcolor{orange}{4}})^{*2}*(x_{\textcolor{teal}{5}}x_{\textcolor{teal}{8}})^{*3} \quad  \mapsto \quad \displaystyle \sum_{\sigma \in \s_{\textcolor{orange}{\{1,4\}}} \times \s_{\textcolor{teal}{\{5,8\}}}} \sigma \cdot (x_{\textcolor{magenta}{2}}x_{\textcolor{magenta}{3}}x_{\textcolor{orange}{1}}^{\textcolor{orange}{2}}x_{\textcolor{teal}{5}}^{\textcolor{teal}{3}})*(x_{\textcolor{red}{6}}x_{\textcolor{red}{7}}x_{\textcolor{orange}{4}}^{\textcolor{orange}{2}}x_{\textcolor{teal}{8}}^{\textcolor{teal}{3}}),
%\]
%and we can see that there is no other monomial of $M_{(2)}^{(2,1,1)}$ that has this image.

This seems to be the only instance where the image of a plethystic tabloid under the generalized Foulkes-Howe map is unique. In general, we have to decompose the module $M^{\nu[\mu]}$ into irreducible representations (which is a hard problem), and verify that each irreducible is not sent to the trivial module. The next section explains how to find the irreducible representations contained in $M^{\nu[\mu]}$.

\section{Decomposition via semistandard homomorphisms}\label{ssh}

\subsection{Semistandard homomorphisms}

We use here the description of \cite{James} describing the number of irreducible subrepresentations of $M^{\mu}$ isomoprhic to $S^{\lambda}$, and mostly use its notations.

Let $\lambda \vdash n$, and let $\tau$ be a tableau of content $\mu$. We denote $T(\lambda,\mu)$ the set of such tableaux. Remark that up to a relabelling of the entries, we can always choose $\mu$ to be a partition. 

Let $t: \lambda \rightarrow \{1,2,\ldots,n\}$ be a bijective tableau. We define an action of $\s_n$ on $T(\lambda,\mu)$, where $(\sigma \cdot \tau)(c,r) = \tau\left( (\sigma^{-1} \cdot t)(c,r) \right)$. There is an isomorphism $f_t: \C\{T(\lambda,\mu)\} \rightarrow M^{\mu}$ such that $f_t(\tau)$ is the tabloid of shape $\mu$ such that if $\tau(c,r)=i$, then the number $t(c,r)$ is in its $i^{th}$ row. 

For example, if $\lambda=(3,2)$ and $\mu=(2,2,1)$  we have
\[
\ytableausetup{notabloids}
t = 
\begin{ytableau}
\textcolor{red}{4}	&\textcolor{red}{5} \\
\textcolor{blue}{1}	&\textcolor{blue}{2}	&\textcolor{orange}{3}
\end{ytableau} \qquad
\tau = 
\begin{ytableau}
\textcolor{red}{2}	& \textcolor{red}{2} \\
\textcolor{blue}{1}	&\textcolor{blue}{1}	& \textcolor{orange}{3}
\end{ytableau}
 \quad \implies \quad
f_{t}(\tau) = \ytableausetup{tabloids} \begin{ytableau} \textcolor{orange}{3} \\ \textcolor{red}{4} & \textcolor{red}{5} \\ \textcolor{blue}{1} & \textcolor{blue}{2} \end{ytableau}.
\]
%Note that the tableau $\tau_{\lambda, \mu}$ (of shape $\lambda$ and content $\mu$ in row-reading order) is such that $f_{t_{\lambda}}(\tau_{\lambda, \mu}) = \xi^{t_{\mu}}$. 

Let $\sim_R$ be the equivalence relation on $T(\lambda,\mu)$ such that $\tau_1 \sim_R \tau_2$ if there exists $\sigma \in R_{t}$ such that $\sigma \cdot \tau_1 = \tau_2$. This allows us to define the following homomorphism:
\[
\begin{array}{crll}
\widehat{\Theta}_{\tau}: & M^{\lambda} & \longrightarrow & M^{\mu} \\
				&  \{t\}	    & \mapsto		 & \displaystyle \sum_{\tau' \sim_R \tau} f_t(\tau')
\end{array}
\]
For example, using the same $\lambda, \mu, t$ and $\tau$ as in the previous example, we have
\[
\ytableausetup{tabloids} \widehat{\Theta}_{\tau} \left( \begin{ytableau} 4	&5 \\ 1	& 2	& 3 \end{ytableau} \right)  \quad \mapsto \quad
 f_{t} \ \left(
\ytableausetup{notabloids}
\begin{ytableau}
2	& 2 \\
1	& 1	& 3
\end{ytableau}
+
\begin{ytableau}
2	& 2 \\
1 	& 3	& 1
\end{ytableau}
+
\begin{ytableau}
2	& 2 \\
3	& 1	& 1
\end{ytableau}
\ \right) \quad = \quad
\ytableausetup{tabloids}
\begin{ytableau} 3 \\ 4 & 5 \\ 1 & 2 \end{ytableau} + \begin{ytableau} 2 \\ 4 & 5 \\ 1 & 3 \end{ytableau} + \begin{ytableau} 1 \\ 4 & 5 \\ 2 & 3 \end{ytableau}
\]
Denote $\Theta_{\tau}$ for the restriction of $\widehat{\Theta}_{\tau}$ to $S^{\lambda}$, so that $\Theta_{\tau}(e(t)) = \displaystyle \sum_{\substack{\tau' \sim_R \tau \\ \pi \in C_t}} \sgn(\pi) f_t(\pi \cdot \tau')$. Taking the previous example, we have
\begin{align*}
\ytableausetup{notabloids} e \left( \begin{ytableau} 4	&5 \\ 1	& 2	& 3 \end{ytableau} \right) &= f_t\left(
\begin{ytableau}
2	& 2 \\
1	& 1	& 3
\end{ytableau}
+
\begin{ytableau}
2	& 2 \\
1 	& 3	& 1
\end{ytableau}
+
\begin{ytableau}
2	& 2 \\
3	& 1	& 1
\end{ytableau}
+\begin{ytableau}
1	& 1 \\
2	& 2	& 3
\end{ytableau}
+
\begin{ytableau}
1	& 3 \\
2 	& 2	& 1
\end{ytableau}
+
\begin{ytableau}
3	& 1 \\
2	& 2	& 1
\end{ytableau}
\right. \\
& \left. \qquad \quad -
\begin{ytableau}
1	& 2 \\
2	& 1	& 3
\end{ytableau}
-
\begin{ytableau}
1	& 2 \\
2 	& 3	& 1
\end{ytableau}
-
\begin{ytableau}
3	& 2 \\
2	& 1	& 1
\end{ytableau}
- 
\begin{ytableau}
2	& 1 \\
1	& 2	& 3
\end{ytableau}
-
\begin{ytableau}
2	& 3 \\
1 	& 2	& 1
\end{ytableau}
-
\begin{ytableau}
2	& 1 \\
3	& 2	& 1
\end{ytableau}
 \right) \\ \ytableausetup{tabloids}
&= \begin{ytableau} 3 \\ 4 & 5 \\ 1 & 2 \end{ytableau} + \begin{ytableau} 2 \\ 4 & 5 \\ 1 & 3 \end{ytableau} + \begin{ytableau} 1 \\ 4 & 5 \\ 2 & 3 \end{ytableau} + \begin{ytableau} 3 \\ 1 & 2 \\ 4 & 5 \end{ytableau} + \begin{ytableau} 5 \\ 1 & 2 \\ 3 & 4 \end{ytableau} + \begin{ytableau} 4 \\ 1 & 2 \\ 3 & 5 \end{ytableau}  \\
&\quad - \begin{ytableau} 3 \\ 1 & 5 \\ 2 & 4 \end{ytableau} - \begin{ytableau} 2 \\ 1 & 5 \\ 3 & 4 \end{ytableau} - \begin{ytableau} 4 \\ 1 & 5 \\ 2 & 3 \end{ytableau} - \begin{ytableau} 3 \\ 2 & 4 \\ 1 & 5 \end{ytableau} - \begin{ytableau} 5 \\ 2 & 4 \\ 1 & 3 \end{ytableau} - \begin{ytableau} 1 \\ 2 & 4 \\ 3 & 5 \end{ytableau} 
\end{align*}

Note that for a fixed $\tau'$, $\displaystyle \sum_{\pi \in C_t} \sgn(\pi) \pi \cdot f_t(\tau') = 0$ if and only if $\tau'$ has a column containing twice the same entry, because it is the only instance where both an odd an an even permutation of $C_t$ gives the same tableau. 

We say that a tableau $\tau$ with entries in $\mathbb{N}*$ is \textit{semistandard} if it is weakly increasing in the rows (from left to right) and strictly increasing in the columns (from bottom to top). When $\tau \in T(\lambda,\mu)$ is semistandard, we say that $\Theta_{\tau}$ is a \textit{semistandard homomorphism}. These homomorphisms have the nice following property:

\begin{prop}
The set $\{ \Theta_{\tau} \ | \ \tau \in T(\lambda,\mu) \ \text{semistandard} \}$ is a basis for $\HOM_{\s_n}(S^{\lambda},M^{\mu})$.
\end{prop}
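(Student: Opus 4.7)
The plan is to establish spanning and linear independence separately.

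For spanning, I would first lift the problem to $\HOM_{\s_n}(M^\lambda, M^\mu)$. Since $\C\s_n$ is semisimple and $S^\lambda$ is a direct summand of $M^\lambda$, the restriction map $\HOM_{\s_n}(M^\lambda, M^\mu) \twoheadrightarrow \HOM_{\s_n}(S^\lambda, M^\mu)$ is surjective, as any morphism from $S^\lambda$ extends to $M^\lambda$ by zero on a complementary summand. Any $\widehat{\varphi} \in \HOM_{\s_n}(M^\lambda, M^\mu)$ is determined by its value at the tabloid $\{t\}$, and this value must lie in the $R_t$-fixed subspace $(M^\mu)^{R_t}$. A natural basis of $(M^\mu)^{R_t}$ is formed by orbit sums over $R_t$-orbits on tabloids of shape $\mu$; pulling these back through the isomorphism $f_t$, each such orbit corresponds to an $R_t$-orbit in $T(\lambda,\mu)$, and the associated orbit sum is precisely $\widehat{\Theta}_\tau(\{t\})$ for a representative $\tau$. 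Restricting to $S^\lambda$, the family $\{\Theta_\tau\}_{\tau \in T(\lambda,\mu)}$ therefore spans $\HOM_{\s_n}(S^\lambda, M^\mu)$.

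Next, I would reduce from general $\tau$ to semistandard ones via Garnir-type relations. Because $\Theta_\tau$ depends only on the row-equivalence class of $\tau$, I may pick a representative with weakly increasing rows. If some column of $\tau$ fails to be strictly increasing, then using the observation recalled just before the proposition---that $\sum_{\pi \in C_t} \sgn(\pi)\,\pi \cdot f_t(\tau') = 0$ whenever $\tau'$ has a repeated column entry---together with a Garnir element straddling the offending descent, one writes $\Theta_\tau$ as an integer linear combination of $\Theta_{\tau'}$ for tableaux $\tau'$ strictly smaller than $\tau$ in a suitable dominance order on row contents. An induction terminates at semistandard tableaux, so the semistandard subfamily still spans.

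For linear independence, I would attach to each semistandard $\tau \in T(\lambda,\mu)$ a distinguished ``leading tabloid'' $\{u_\tau\}$ of shape $\mu$, obtained by placing each entry $t(c,r)$ of the fixed bijective tableau $t$ into the $\tau(c,r)$-th row of $\{u_\tau\}$ using the row-increasing representative of $\tau$. Using that $\tau$ is column-strict, I would verify that $\{u_\tau\}$ appears in $\Theta_\tau(e(t))$ with coefficient $\pm 1$, that it is maximal (in a dominance order on tabloids of shape $\mu$) among the tabloids occurring in $\Theta_\tau(e(t))$, and that the assignment $\tau \mapsto \{u_\tau\}$ is injective on semistandard tableaux. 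Ordering semistandard tableaux compatibly then makes the coefficient matrix expressing the $\Theta_\tau(e(t))$ in the tabloid basis of $M^\mu$ upper-triangular with nonzero diagonal, forcing linear independence.

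The main obstacle will be the combinatorial bookkeeping in the last step: fixing the dominance order on tabloids, proving maximality of $\{u_\tau\}$, and ensuring the Garnir reduction genuinely terminates in a well-founded way. Both items are essentially the content of James's original analysis, and the key technical point is controlling how column-antisymmetrization interacts with the row-orbit sum defining $\Theta_\tau$, which is precisely where the semistandard hypothesis enters.
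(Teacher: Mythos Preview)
The paper does not give its own proof of this proposition; it is simply stated as a known result from James \cite{James}. Your outline is correct and is essentially James's argument: the $\widehat{\Theta}_\tau$ span $\HOM_{\s_n}(M^\lambda,M^\mu)$ via the identification with $(M^\mu)^{R_t}$, restriction to $S^\lambda$ is surjective (you invoke semisimplicity over $\C$, which is legitimate here even though James works characteristic-free), straightening via Garnir elements reduces to the semistandard $\tau$, and a leading-term triangularity argument gives linear independence. Your honest assessment that the combinatorial bookkeeping in the straightening and triangularity steps is the main technical burden is accurate; that is exactly what James's Chapter~13 carries out in detail.

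One small caution on your independence step: the ``leading tabloid'' you want is not merely maximal among all tabloids appearing in $\Theta_\tau(e(t))$ under a dominance-type order on tabloids, but rather extremal for a column-based order on the underlying tableaux $\pi\cdot\tau'$; the semistandardness of $\tau$ is what guarantees that $\tau$ itself is the unique extremal contributor surviving the column antisymmetrisation. Phrasing it as you did (dominance on tabloids of shape $\mu$) will work, but you will need to be careful that the order you pick actually separates the distinct $f_t(\tau)$ for semistandard $\tau$ and is genuinely compatible with the action of $C_t$ and $R_t$. James's choice is a column-lexicographic order on the tableaux rather than on the target tabloids, which makes both the maximality and the injectivity of $\tau\mapsto\{u_\tau\}$ transparent.
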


By Schur's lemma, it implies that the number of irreducible subrepresentations of $M^{\mu}$ isomorphic to $S^{\lambda}$ correspond to the number of semistandard tableaux of shape $\lambda$ and content $\mu$.

\subsection{Application for the modules \texorpdfstring{$M_{\mu}^{\nu}$}{Mmn}}

We want to use the semistandard homomorphisms to study the modules $M_{\mu}^{\nu}$. First, combining the semistandard homomorphisms and the morphisms of section \ref{permmod}, we obtain the following setting:

\begin{center}
\begin{tikzcd}
S^{\lambda}
	\arrow[r]
	\arrow[rr, bend left, "\Theta_{\tau}"] 
& M^{\nu[\mu]}
	\arrow[r, hook, "\widetilde{\phi}"]
	\arrow[rr, bend right, "\phi \circ \widetilde{\phi} = \text{Id}"]
&M^{\mu^n}
	\arrow[r, two heads, "\phi"]
&M^{\nu[\mu]}
\end{tikzcd}
\end{center}

So to understand the decomposition of $M^{\nu[\mu]}$, let $\lambda$ be a partition of $nm$, and $\tau$ a semistandard tableau of shape $\lambda$ and content $\mu^n$. Define $\overline{\Theta}_{\tau} = \phi \circ \Theta_{\tau}$. The diagram means that the morphisms $\overline{\Theta}_{\tau}$, for $\tau \in T(\lambda,\mu)$, is a generating set of $\HOM_{\s_{nm}}(S^{\lambda}, M^{\nu[\mu]})$. The cardinality of a basis gives the number of copies of $S^{\lambda}$ in $M^{\nu[\mu]}$. However, finding such a basis is a hard problem. 

\section{Stability properties}\label{results2}

We use semistandard homomorphisms to prove that some sequences of plethysm coefficients are increasing. To do so, we generalize the arguments of de Boeck in \cite{deBoeck}. 

For two tableaux $\tau_1,\tau_2$, we define the \textit{join} of the two tableaux, denote $\tau_1 \vee \tau_2$, to be the tableau such that the row $i$, when read from left to right, consists of the row $i$ of $\tau_1$ followed by the row $i$ of $\tau_2$. For example:
\[
\ytableausetup{notabloids} \tau_1 = \begin{ytableau} 3 & 4 \\ 2 & 2 & 3 \\ 1 & 1 & 1 \end{ytableau}; \qquad \tau_2 = \begin{ytableau} 3 & 4 \\ 1 & 2 \end{ytableau}; \qquad \tau_1 \vee \tau_2 = \begin{ytableau} 3 & 4 \\ 2 & 2 & 3 & 3 & 4 \\ 1 & 1 & 1 & 1 & 2 \end{ytableau}.
\] 
If $\tau_1$ is of shape $\mu_1$ and $\tau_2$ is of shape $\mu_2$, then $\tau_1 \vee \tau_2$ is of shape $\mu_1+\mu_2$. 

Let $\nu \vdash n$, $\mu \vdash m$, and $\lambda \vdash nm$. We have the following lemma:

\begin{lem}\label{lemma1}
Let $\tau \in T(\lambda,\mu^n)$ such that $\overline{\Theta}_{\tau}: S^{\lambda} \rightarrow M^{\nu[\mu]}$ is non-zero. Then, let $\widetilde{\mu}$ be a partition of any integer $\widetilde{m}$, and let $\widetilde{\tau}$ be the only semistandard tableau of shape $(n\widetilde{m})$ and content $\widetilde{\mu}^n$. Then, the tableau $\widehat{\tau} = \tau \vee \widetilde{\tau}$ is such that $\overline{\Theta}_{\widehat{\tau}}: S^{\lambda + (n\widetilde{m})} \rightarrow M^{\nu[\mu + \widetilde{\mu}]}$ is also non-zero.
\begin{proof}
The morphism on a polytabloid $e(t)$ is
\[
\overline{\Theta}_{\tau}(e(t)) = \displaystyle \sum_{\substack{\tau' \sim_R \tau \\ \pi \in C_t}} \sgn(\pi) \phi(f_{t}(\pi \cdot \tau')).
\]
As the morphism is non-zero and $S^{\lambda}$ is a cyclic module, we have that for any bijective tableau $t$ of shape $\lambda$, $\overline{\Theta}_{\tau}(e(t)) \neq 0$. Fix one, and choose a plethystic tabloid $\{\bm{T}\}$ of shape $\nu[\mu]$ having a non-zero coefficient in $\overline{\Theta}_{\tau}(e(t))$.
Denote $B_{t}(\tau,\{\bm{T}\}) = \{ (\tau', \pi) \in T(\lambda,\mu^n) \times C_{\lambda} \ | \ \phi(f_t(\pi \cdot \tau')) = \{\bm{T}\} \}$, so that
\[
\overline{\Theta}_{\tau}(e(t)) = \displaystyle \sum_{(\tau',\pi) \in B_t(\tau,\{\bm{T}\})} \sgn(\pi) \{\bm{T}\} + \sum_{(\tau',\pi) \not\in B_t(\tau,\{\bm{T}\})} \sgn(\pi) \phi(f_{t}(\pi \cdot \tau'))
\]
This means that the coefficient of $\{\bm{T}\}$ in $\overline{\Theta}_{\tau}(e(t))$ is
\[
\displaystyle \sum_{(\tau',\pi) \in B_t(\tau,\{\bm{T}\})} \sgn(\pi),
\]
and by hypothesis, it does not vanish.

Now consider the morphism $\overline{\Theta}_{\widehat{\tau}}$. First, let $\widetilde{t}$ be the unique tableau of shape $(n\widetilde{m})$ and content $\{nm+1, \ldots, n(m+m')\}$ such that the row is increasing. Then, the tableau $\widehat{t} = t \vee \widetilde{t}$ is a bijective tableau of shape $\lambda + (n\widetilde{m})$ with entries in $\{1,2,\ldots,n(m+\widetilde{m})\}$. 

We want to describe the plethystic tabloid $\{\widehat{\bm{T}}\}$ of shape $\nu[\mu + \widetilde{\mu}]$ such that $\phi(f_{\widehat{t}}(\widehat{\tau})) = \{\widehat{\bm{T}}\}$. Consider the sets $A_k = \{t(c,1) \ | \ \tau(c,1) = k\}$. Then, we obtain $\{\widehat{\bm{T}}\}$ by adding the values of $A_k$ to the row of the inner tabloid of $\{\bm{T}\}$ containing the values $t(c,r)$ for $\tau(c,r)=k$. 

To prove the lemma, it is enough to show that the coefficient of $\{\widehat{\bm{T}}\}$ in $\overline{\Theta}_{\widehat{\tau}}(e( \ \widehat{t} \ )$ is also non-zero. Using the same technique and notations as before, this coefficient is
\[
\displaystyle \sum_{(\widehat{\tau}',\widehat{\pi}) \in B_{\widehat{t}}(\widehat{\tau},\{\widehat{\bm{T}}\})} \sgn(\widehat{\pi}).
\]
Consider $\widehat{\pi} \cdot \widehat{\tau}'$ for $(\widehat{\tau}',\widehat{\pi}) \in B_{\widehat{t}}(\widehat{\tau},\{\widehat{\bm{T}}\})$. Note that $C_{\widehat{t}}$ is the same as the inclusion of $C_t$ in $\s_{n(m+\widetilde{m})}$. So, by the description of $\{\widehat{\bm{T}}\}$, it must be the case that $\widehat{\pi} \cdot \widehat{\tau}' = (\pi \cdot \tau') \vee s$ for $(\tau',\pi) \in B_t(\tau, \{\bm{T}\})$ and $s$ a tableau of shape $(n\widetilde{m})$, for a unique tableau $s$. So, the coefficient of $\{\widehat{\bm{T}}\}$ in $\overline{\Theta}_{\widehat{\tau}}(e(\ \widehat{t} \ ))$ is the same as the coefficient of $\{\bm{T}\}$ in $\overline{\Theta}_{\tau}(e(t))$, and by assumption, it is non-zero. 
\end{proof}
\end{lem}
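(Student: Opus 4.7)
The plan is to demonstrate nonvanishing of $\overline{\Theta}_{\widehat{\tau}}$ by identifying an explicit plethystic tabloid $\{\widehat{\bm{T}}\}$ appearing with nonzero coefficient in the image of a generator of $S^{\lambda+(n\widetilde{m})}$. Since $S^{\lambda}$ is cyclic, the hypothesis gives, for any bijective tableau $t$ of shape $\lambda$, a plethystic tabloid $\{\bm{T}\}$ of shape $\nu[\mu]$ whose coefficient in $\overline{\Theta}_\tau(e(t))$ is a nonzero signed sum over pairs $(\tau',\pi)$ with $\tau'\sim_R\tau$, $\pi\in C_t$, and $\phi(f_t(\pi\cdot\tau'))=\{\bm{T}\}$. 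I fix such $t$ and $\{\bm{T}\}$.

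Next I would build $\widehat{t}$ and $\{\widehat{\bm{T}}\}$ in parallel: take $\widehat{t}=t\vee\widetilde{t}$, where $\widetilde{t}$ is the one-row tableau of shape $(n\widetilde{m})$ with entries $nm+1,\ldots,n(m+\widetilde{m})$ placed in increasing order, and define $\{\widehat{\bm{T}}\}$ by appending, to each row of each inner tabloid of $\{\bm{T}\}$, the new entries prescribed by the $\widetilde{\tau}$-part of $\widehat{\tau}$. The essential observation is that since the shape is extended by a single extra row, $C_{\widehat{t}}=C_t$ inside $\s_{n(m+\widetilde{m})}$: no element of the column group can permute any of the new entries.

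The final step is a sign-preserving bijection between the contributing sets $B_t(\tau,\{\bm{T}\})$ and $B_{\widehat{t}}(\widehat{\tau},\{\widehat{\bm{T}}\})$. The uniqueness of $\widetilde{\tau}$ as the only semistandard tableau of its one-row shape with content $\widetilde{\mu}^n$ forces any $\widehat{\tau}'\sim_R\widehat{\tau}$ contributing to $\{\widehat{\bm{T}}\}$ to factor as $\tau'\vee s$, where $\tau'\sim_R\tau$ contributes to $\{\bm{T}\}$ and $s$ is uniquely determined by $\{\widehat{\bm{T}}\}$. Combined with $C_{\widehat{t}}=C_t$, the assignment $(\tau',\pi)\mapsto(\tau'\vee s,\pi)$ is the desired bijection, with $\sgn(\widehat{\pi})=\sgn(\pi)$, so the two signed sums coincide and nonvanishing transfers.

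I expect the main obstacle to be the careful bookkeeping verifying that $\{\widehat{\bm{T}}\}$ is well-defined as a plethystic tabloid independently of the chosen representative of $\{\bm{T}\}$, and that the factorization $\widehat{\tau}'=\tau'\vee s$ really is forced. Both points hinge on the $\widetilde{\tau}$-part being a single row with a unique semistandard filling; once they are secured, the sign identity and the equality of coefficients follow automatically.
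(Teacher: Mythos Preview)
Your proposal is correct and follows essentially the same route as the paper: fix $t$ and a plethystic tabloid $\{\bm{T}\}$ with nonzero coefficient, extend to $\widehat{t}=t\vee\widetilde{t}$ and $\{\widehat{\bm{T}}\}$, use $C_{\widehat{t}}=C_t$ (since the new cells lie in a single row), and set up a sign-preserving bijection between the contributing sets so the two coefficients agree. One small remark: the factorization $\widehat{\tau}'=\tau'\vee s$ is forced not by the semistandardness of $\widetilde{\tau}$ per se, but by the fact that $\widehat{\pi}$ fixes the new cells together with the explicit description of which rows of $\{\widehat{\bm{T}}\}$ the new entries occupy---you already note that $s$ is uniquely determined by $\{\widehat{\bm{T}}\}$, which is the operative reason.
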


This lemma is sufficient to prove theorem \ref{thm1}.

\begin{proof}[Proof of theorem \ref{thm1}]
Let $\{\overline{\Theta}_{\tau_1}, ..., \overline{\Theta}_{\tau_r}\}$ be a basis of $\text{Hom}_{\s_{nm}}(S^{\lambda},M^{\nu[\mu]})$, so $a_{\nu[\mu]}^{\lambda} = r$. Denote $\widehat{\tau}_i = \tau_i \vee \widetilde{\tau}$ (as in lemma \ref{lemma1}). The theorem follows from the fact that $\{\overline{\Theta}_{\widehat{\tau}_1}, ..., \overline{\Theta}_{\widehat{\tau}_r}\}$ is linearly independant. Suppose that there exists $\alpha_1,...,\alpha_r$ such that $\alpha_1\overline{\Theta}_{\widehat{\tau}_1} + ... + \alpha_r\overline{\Theta}_{\widehat{\tau}_r} = 0$. It means that for any bijective tableau $\widehat{t}$ of shape $\lambda + (n\tilde{m})$,
\[
\displaystyle \sum_{i=1}^r \alpha_i \overline{\Theta}_{\widehat{\tau}_i}(\Delta_{\widehat{t}}) = 0.
\]
So, any term of this sum vanishes; in particular, for any plethystic tabloid $\{\widehat{\bm{T}}\}$ of shape $\nu[\mu + \widetilde{\mu}]$, its coefficient is 0.  But by lemma \ref{lemma1}, it means that the coefficient of $\{\bm{T}\}$, the plethystic tabloid of shape $\nu[\mu]$ obtained by removing entries from $nm+1$ to $n(m+\tilde{m})$, is also 0. As every tabloid of shape $\nu[\mu]$ is obtained this way, we have $\alpha_1\overline{\Theta}_{\tau_1} + ... + \alpha_r\overline{\Theta}_{\tau_r} = 0$. But this means that $\alpha_1 = ... =  \alpha_r = 0$, because this set is a basis. Hence, $\{\overline{\Theta}_{\widehat{\tau}_1}, ...,\overline{\Theta}_{\widehat{\tau}_r}\}$ must also be linearly independent.
\end{proof}

We also generalize another stability property, originally due to Dent \cite{Dent}. 

\begin{lem}
Let $\tau \in T(\lambda,\mu^n)$ such that $\overline{\Theta}_{\tau}: S^{\lambda} \rightarrow M_{\mu}^{\nu}$ is non-zero. Then, if the number of parts of $\mu$ is $\tilde{m}$, consider the tableau $\widetilde{\tau}$ of shape $(2^{n\widetilde{m}})$ such that the two entries of row $r$ are $r$. Then, $\widehat{\tau} = \widetilde{\tau} \vee \tau$ is such that $\overline{\Theta}_{\widehat{\tau}}: S^{\lambda + (2^{\tilde{m}n})} \rightarrow M^{\nu[\mu + (2^{\tilde{m}})]}$ is also non-zero.
\begin{proof}

We first claim that a tableau $\widehat{\tau}' \sim_R \widehat{\tau}$ doesn't have twice the same entry in a column if and only if $\widehat{\tau}' = \widetilde{\tau} \vee \tau'$ for $\tau' \sim_R \tau$ with the same property. For row $r > \ell(\lambda)$, there is two entries, and both are $r$. For row $r = \ell(\lambda)$, then all entries are bigger or equal to $r$. So, if the entries of the two columns added (the $\widetilde{\tau}$ part) are permuted, it is to be replaced by entries bigger than $r$. But every entry between $r$ and $n\widetilde{m}$ are already in cells above, so there would be twice the same entry in the same column. Making the same argument the row $\ell(\lambda)$ down to row $1$, we see that the $\widetilde{\tau}$ part must be preserved.

Consider the bijective tableau $\widetilde{t}: (2^{n\widetilde{m}}) \rightarrow \{nm+1,..., n(m+2\tilde{m})\}$ which is order-preserving (from $<_{\text{rlex}}$ to the order of $\N$). For $\widehat{t} = \widetilde{t} \vee t$, which is a bijective tableau of shape $\lambda + (2^{\tilde{m}n})$ with entries in $\{1,\ldots, n(m+2\widetilde{m})\}$, consider 
\[
\overline{\Theta}_{\widehat{\tau}}(e( \ \widehat{t} \ )) = \displaystyle \sum_{\substack{\widehat{\tau}' \sim_R \widehat{\tau} \\ \widehat{\pi} \in C_{\widehat{t}}}} \sgn(\widehat{\pi}) \phi(f_{\widehat{t}}(\widehat{\pi} \cdot \widehat{\tau}')).
\] 
We have that $C_{\widehat{t}} = C_t \times C_{\widetilde{t}}$. By the claim, we can then write:
\[
\overline{\Theta}_{\widehat{\tau}}(e( \ \widehat{t} \ )) = \displaystyle \sum_{\substack{\widehat{\tau}' \sim_R \widehat{\tau} \\ (\pi,\widetilde{\pi}) \in C_t \times C_{\widetilde{t}}}} \sgn(\pi)\sgn(\widetilde{\pi}) \phi(f_{\widehat{t}}((\widetilde{\pi} \cdot \widetilde{\tau}) \vee (\pi \cdot \tau)))
\] 
Now let’s take $\tau$ such that $\overline{\Theta}_{\tau} \neq 0$. We know that there exists a plethystic tabloid $\{\bm{T}\}$ of shape $\nu[\mu]$ such that its coefficient is non-zero. Using the same notation as in lemma \ref{lemma1}, its coefficient is
\[
\displaystyle \sum_{(\tau',\pi) \in B_{t}(\tau, \{\bm{T}\})} \sgn(\pi).
\]
Denote $\{\tilde{\bm{T}}\}$ the plethystic tabloid of shape $(2^{n\widetilde{m}})$ obtain by adding $nm+2k-1$ and $nm+2k$ to the row of the inner tabloid of $\{\bm{T}\}$ containing the values $t(c,r)$ for $\tau(c,r)=k$. This tableau is such that $\phi(f_{\widehat{t}}(\widehat{\tau})) = \{\widehat{\bm{T}}\}$. Its coefficient in $\overline{\Theta}_{\widehat{\tau}}(e( \ \widehat{t} \ ))$ is
\[
\displaystyle \sum_{(\widehat{\tau}',\widehat{\pi}) \in B_{\widehat{t}}(\widehat{\tau},\{\widehat{\bm{T}}\})} \sgn(\widehat{\pi})
\]
Consider $\widehat{\pi} \cdot \widehat{\tau}'$ for $(\widehat{\tau}',\widehat{\pi}) \in B_{\widehat{t}}(\widehat{\tau},\{\widehat{\bm{T}}\})$. By the claim, we can restrict ourselves to those of the form $(\widetilde{\pi} \cdot \widetilde{\tau}) \vee (\pi \cdot \tau')$ with $(\tau',\pi) \in B_t(\tau,\{\bm{T}\})$ and $\widetilde{\pi} \in C_{\widetilde{t}}$. But by construction of $\{\widehat{\bm{T}}\}$ from $\{\bm{T}\}$, it must be the case that $\widetilde{\pi}$ is unique, and is equivalent to a permutation of $\s_{n\widetilde{m}}$ for the values of the entries of $\widetilde{\tau}$. But to do so, $\widetilde{\pi}$ must permute the same number of cells in each column, hence is an even permutation. So, the coefficient of $\{\widehat{\bm{T}}\}$ in $\overline{\Theta}_{\widehat{\tau}}(e( \ \widehat{t} \ ))$ is
\[
\displaystyle \sum_{(\tau',\pi) \in B_{t}(\tau,\{\bm{T}\})} \sgn(\pi)\sgn(\widetilde{\pi}).
\]
As $\sgn(\widetilde{\pi}) = 1$, it is the same as the coefficient of $\{\bm{T}\}$ in $\overline{\Theta}_{\tau}(e(t))$, and by assumption, it is non-zero. 
\end{proof}
\end{lem}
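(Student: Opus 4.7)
The plan is to extend the approach of Lemma~\ref{lemma1}, with additional care needed because the added tableau $\widetilde{\tau}$ now has two columns and a non-trivial column stabilizer on the $\widetilde{t}$-side. As in the previous lemma, since $\overline{\Theta}_{\tau} \neq 0$ and $S^{\lambda}$ is cyclic, I would begin by fixing a bijective $t$ of shape $\lambda$ and a plethystic tabloid $\{\bm{T}\}$ with non-zero coefficient in $\overline{\Theta}_{\tau}(e(t))$, then choose a bijective $\widetilde{t}$ of shape $(2^{n\widetilde{m}})$ with entries in $\{nm+1, \ldots, nm + 2n\widetilde{m}\}$, say $\widetilde{t}(1, r) = nm + 2r - 1$ and $\widetilde{t}(2, r) = nm + 2r$. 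Setting $\widehat{t} = \widetilde{t} \vee t$ and defining $\{\widehat{\bm{T}}\}$ as the plethystic tabloid obtained from $\{\bm{T}\}$ by inserting the pair $\{\widetilde{t}(1, r), \widetilde{t}(2, r)\}$ into the row of the inner tabloid corresponding to the value $r$ in $\tau$, the goal is to show that the coefficient of $\{\widehat{\bm{T}}\}$ in $\overline{\Theta}_{\widehat{\tau}}(e(\widehat{t}))$ is a positive multiple of the coefficient of $\{\bm{T}\}$ in $\overline{\Theta}_{\tau}(e(t))$.

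The main combinatorial claim I would establish is that any $\widehat{\tau}' \sim_R \widehat{\tau}$ with no repeated entry in any column must be of the form $\widetilde{\tau} \vee \tau'$ with $\tau' \sim_R \tau$ also having no column repeat. This I would prove by descending induction on rows. Note first that semistandardness of $\tau$, together with the content being $\mu^n$, forces $\ell(\lambda) \leq n\widetilde{m}$. For $r$ with $\ell(\lambda) < r \leq n\widetilde{m}$, row $r$ of $\widehat{\tau}$ contains only two copies of $r$ (from $\widetilde{\tau}$), which forces both cells to hold $r$. For $r \leq \ell(\lambda)$, the entries of $\tau$ in row $r$ are all $\geq r$ by semistandardness, and by induction the values $r+1, \ldots, n\widetilde{m}$ have already been placed in columns $1$ and $2$ in the rows strictly above $r$; the only way to avoid a column repeat is therefore to place $r$ at both $(1, r)$ and $(2, r)$. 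This pins down the $\widetilde{\tau}$-block of $\widehat{\tau}'$ and leaves some $\tau' \sim_R \tau$ in the remaining cells.

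With this structural result, I would exploit the factorization $C_{\widehat{t}} = C_{\widetilde{t}} \times C_t$. Writing $\widehat{\pi} = (\widetilde{\pi}, \pi)$, one has $\widehat{\pi} \cdot (\widetilde{\tau} \vee \tau') = (\widetilde{\pi} \cdot \widetilde{\tau}) \vee (\pi \cdot \tau')$, so the sum defining the coefficient of $\{\widehat{\bm{T}}\}$ in $\overline{\Theta}_{\widehat{\tau}}(e(\widehat{t}))$ decouples into a sum over $(\tau', \pi) \in B_{t}(\tau, \{\bm{T}\})$ and a sum over the $\widetilde{\pi}$'s whose contribution lands on $\{\widehat{\bm{T}}\}$. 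The first piece exactly reproduces the non-zero coefficient of $\{\bm{T}\}$ in $\overline{\Theta}_{\tau}(e(t))$. For the second, the key observation is that every admissible $\widetilde{\pi}$ has sign $+1$: since $\widetilde{\pi}$ permutes only within columns and each pair $\{\widetilde{t}(1, r), \widetilde{t}(2, r)\}$ sits together in a single row of one inner tabloid of $\{\widehat{\bm{T}}\}$, preserving the plethystic tabloid forces $\widetilde{\pi}$ to carry each such pair bodily to another pair of the same form. This means $\widetilde{\pi}$ acts by the same underlying row-permutation $\sigma$ on both columns, yielding $\sgn(\widetilde{\pi}) = \sgn(\sigma)^2 = +1$. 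All contributions therefore add without cancellation, and the coefficient of $\{\widehat{\bm{T}}\}$ is a strictly positive multiple of the non-zero coefficient of $\{\bm{T}\}$.

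The main obstacle is the final parity step: carefully matching the various symmetries of the plethystic tabloid $\{\widehat{\bm{T}}\}$ (those from permuting inner tabloids within the same row of $\nu$, from permuting rows inside each inner tabloid, and from the $C_{\widetilde{t}}$-action on $\widetilde{\tau}$) to identify exactly which $\widetilde{\pi}$'s yield $\{\widehat{\bm{T}}\}$ and to verify that they act diagonally on the two columns. This diagonal-action phenomenon is the genuinely new feature compared to Lemma~\ref{lemma1}, in which $\widetilde{\tau}$ had a single column and $C_{\widetilde{t}}$ was trivial; it is precisely the even parity of the admissible $\widetilde{\pi}$'s that prevents cancellation and makes the argument go through.
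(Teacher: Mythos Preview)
Your proposal is correct and follows essentially the same approach as the paper's proof: the same descending-row argument to pin down the $\widetilde{\tau}$-block of any column-distinct $\widehat{\tau}'$, the same factorization $C_{\widehat{t}}=C_{\widetilde{t}}\times C_t$, and the same parity observation that admissible $\widetilde{\pi}$'s act by the same permutation on both columns and hence have sign $+1$. The only cosmetic difference is that the paper asserts uniqueness of $\widetilde{\pi}$ (for each fixed $(\tau',\pi)$) and so obtains the \emph{same} coefficient rather than merely a positive multiple, but your weaker conclusion is already sufficient and your diagonal-action justification for $\sgn(\widetilde{\pi})=1$ is, if anything, slightly cleaner.
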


This lemma is also sufficient to prove theorem \ref{thm2}, by using the same proof than the one for \ref{thm1}.

\section{Conclusion}

In the future, we hope to find more cases where there is an injection $M^{\nu[\mu]} \hookrightarrow M^{\mu[\nu]}$, \textrm{i.e.} more cases where $\nu \trianglelefteq \mu$. We also hope to find additional clues that this is an order relation, like propagation theorems, either for the plethysm coefficients or for the generalized Foulkes-Howe map. It would also be great to find a way to use efficiently the semistandard homomorphisms. 

In general, this conjecture deserved to be studied more. In addition to being a nice generalization of the Foulkes’ conjecture, it would be a great step toward the understanding of the plethysm of Schur functions.

\bibliographystyle{plain}
\bibliography{bibplethysm}

\end{document}